%% file: main.tex
\documentclass[11 pt]{article}
\usepackage[margin=1in]{geometry}
\input{Compilation_Files/preamble}

\title{Counting cliques in graphs with small independence number}
\author{Logan Post\thanks{School of Mathematics, Georgia Institute of Technology, Atlanta, GA 30332. Email: lpost3@gatech.edu}}

\begin{document}

\maketitle

\abstract{We prove that for all fixed $k\geq 4$, any $N$ vertex graph with no independent set of size $n$ and $N\geq \Omega(n^{k-1}/\log^{k-2}n)$ contains at least
\[
\Omega\bigg(\binom Nk \Big(\frac{\log n}{n}\Big)^{\binom k2}/\log n\bigg)
\]
cliques of order $k$, and for $k\geq 5$ this is best possible conditional on the known upper bounds for $r(k,n)$. This is also true and tight for $k=2$ by Tur\'an's Theorem and for $k=3$ by a result of Bohman and Mubayi. We show the bound is also tight for $k=4$. We obtain other supersaturation results using the same methods.
}

\section{Introduction}
What is the minimum number of unlabeled $k$-cliques which must be contained in an $N$-vertex graph if it contains no independent set of size $n$? Erd\H{o}s \cite{Erdos1962} asked for this value, denoted $f(N,k,n)$, in $1962$, and a substantial line of work (see \cite{Das,Nikiforov,Pikhurko}) has studied the problem for fixed parameters $n$ and $k$. Here, we study the regime in which $n$ grows with $N$.

In the moving parameter regime, $f(N,k,n)$ generalizes the Tur\'an number $\rm{ex}(N,n)={\binom{N}{2}}-f(N,2,n)$. Additionally, $f$ captures the behavior of the off-diagonal Ramsey number $r(k,n)$, as we have $f(N,k,n)=0$ if and only if $N<r(k,n)$. The current best known bounds on the off-diagonal Ramsey numbers for fixed $k\geq 4$ are
\begin{equation}\label{eq:ramseybounds}
\Omega\Big(\frac{n^{k-1}}{\log^{2k-4}n}\Big)\leq r(k,n)\leq (1+o(1))\frac{n^{k-1}}{\log^{k-2}n}.
\end{equation}
The upper bound order is due to Ajtai, Koml\'os, and Szemer\'edi \cite{AKS} with the leading constant due to Shearer \cite{shearer} for $k=3$ and Li, Rousseau, and Zang \cite{LiRousseauZang2001} for $k\geq 4$. The lower bound is quite recent, due to Brada\v c \cite{bradac} (for $k=4$ see \cite{r4k}). These bounds estimate when $f=0$, and a routine averaging argument can be used to obtain `supersaturation' just over the threshold. A result of Bohman and Mubayi \cite{Bohman_Mubayi_2019} proves that when $N\geq \Omega(n^{k-1})$, we have a much larger abundance of $k$-cliques. When $k=3$, where it is known \cite{kim,r3k} that $r(3,n)=\Theta(n^2/\log n)$, they prove a much stronger existence-to-abundance phase transition at exactly $N=\Theta(r(3,n))$, and compute $f(N,3,n)$ up to a constant factor (in $N$).

In this paper, we obtain analogous lower bounds for all $k\geq 4$ down to a threshold which matches the upper bound in Equation \eqref{eq:ramseybounds} up to a constant. Conditional on the tightness of the upper bound in \eqref{eq:ramseybounds}, our results are also optimal up to constants; otherwise we are within one factor of $\log n$. Our result is a supersaturation theorem which illustrates a phase transition of $f$ at $N=\tilde \Theta(r(k,n))$ from $0$ to approximately random-like (see \eqref{eq:gdefn}). In addition, we determine the hierarchy of $i$-clique counts for $i<k$ up to a constant factor in graphs with small independent sets and without many $k$-cliques. We prove that our bounds are tight when $k=4$, and discuss the generalization $f(N,H,n)$ which counts copies of an arbitrary graph $H$.

\subsection{Main Results}

We begin with the theorem of Bohman and Mubayi from \cite{Bohman_Mubayi_2019} upon which we improve in this paper.

\begin{theorem}[Bohman--Mubayi]\label{thm:bohmanmubayi} Let $k\geq 2$ be a fixed constant. If $G$ is a graph on $N$ vertices with no independent set of size $n$ and containing $t$ copies of $K_k$, then 
\[
n> \begin{cases}
c_kN^{\frac{1}{k-1}}&\text{ if }t\leq N^{k/2}\\
c_k\big(\frac{N^k}{t}\big)^{\frac{1}{{\binom{k}{2}}}}&\text{ if }t\geq N^{k/2}\\
\end{cases}
\]
\end{theorem}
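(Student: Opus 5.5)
We argue by induction on $k$, using only two facts: a graph with independence number less than $n$ on $M$ vertices has at least about $M^2/n$ edges (Tur\'an), and the neighbourhood of any vertex spans a graph with independence number less than $n$. The base case $k=2$ is Tur\'an's theorem. Since $\alpha(G)<n$, we get $t\ge \binom N2-\mathrm{ex}(N,n)\gtrsim N^2/n$, i.e.\ $n\gtrsim N^2/t$. This is the second regime, since $\binom22=1$. The first regime, $t\le N$, is vacuous, as $t\ge N^2/n$ and $n\le N$ would force $t\ge N$. Throughout we allow $c_k$ to be a small constant, so we may assume $N$ is large compared with $n$.

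For the inductive step, let $G$ have $N$ vertices, $\alpha(G)<n$ and $t$ copies of $K_k$. For each vertex $v$, let $d_v$ be its degree and $t_v$ the number of $K_{k-1}$'s in $N(v)$. Then $\sum_v t_v=k\,t$ and $\sum_v d_v\gtrsim N^2/n$. Since $N(v)$ also has no independent set of size $n$, the inductive hypothesis applied to $G[N(v)]$ says that either $d_v\lesssim n^{k-2}$, or $t_v\gtrsim d_v^{k-1}/n^{\binom{k-1}2}$. If the dichotomy is imperfect near $t_v\approx d_v^{(k-1)/2}$, we keep the weaker of the two bounds.

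\textbf{Case split.} Consider the vertices with $d_v\ge C n^{k-2}$ for a large constant $C$.

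If they carry a constant fraction of $\sum d_v$, we combine the per-vertex bounds by convexity: $\sum_v t_v\gtrsim \sum d_v^{k-1}/n^{\binom{k-1}2}\ge N\,(\bar d)^{k-1}/n^{\binom{k-1}2}$, where $\bar d\gtrsim N/n$. This gives $t\gtrsim N^{k}/n^{k-1+\binom{k-1}2}=N^k/n^{\binom k2}$, which rearranges to the second regime.

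Otherwise the low-degree vertices carry most of the degree sum. Then $N^2/n\lesssim N\cdot n^{k-2}$, so $N\lesssim n^{k-1}$, i.e.\ $n\gtrsim N^{1/(k-1)}$. This is the first regime.

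\textbf{Matching to the thresholds.} It remains to check that the conclusions land in the right regimes. If $t\ge N^{k/2}$, the bound $(N^k/t)^{1/\binom k2}$ is at most $N^{1/(k-1)}$, so the first alternative also implies the stated inequality. The first regime's bound therefore holds in every case, and the only place more is needed is $t\ge N^{k/2}$, where the second case supplies it.

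If $t\le N^{k/2}$, we must show $n\gtrsim N^{1/(k-1)}$ even when the second case applies. There $t\gtrsim N^k/n^{\binom k2}$, so $n^{\binom k2}\gtrsim N^{k/2}$, giving $n\gtrsim N^{1/(k-1)}$ because $\binom k2\cdot\frac1{k-1}=\frac k2$. So both cases give the first regime when $t\le N^{k/2}$, and when $t\ge N^{k/2}$ the second case gives the stronger bound.

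The remaining subcase is $t\ge N^{k/2}$ with most of the degree on low-degree vertices. We need $n\gtrsim (N^k/t)^{1/\binom k2}$, but the case split only gives $n\gtrsim N^{1/(k-1)}$, which is at least $(N^k/t)^{1/\binom k2}$. So this subcase is fine too.

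\textbf{Main obstacle.} The delicate step is the application of the inductive hypothesis inside neighbourhoods. The hypothesis is a dichotomy in $t_v$ rather than a clean lower bound, so we must ensure that vertices with $d_v\ge Cn^{k-2}$ really fall into the regime giving $t_v\gtrsim d_v^{k-1}/n^{\binom{k-1}{2}}$. Contrapositively, if $t_v\le d_v^{(k-1)/2}$, the first alternative forces $d_v\lesssim n^{k-2}$. Hence for large-degree vertices $t_v\ge d_v^{(k-1)/2}$ and the second bound applies. Choosing the constant $C$ consistently with $c_{k-1}$, and handling the convexity step when only part of the degree mass is on high-degree vertices (Jensen applied over those vertices alone, with their total degree $\gtrsim N^2/n$), is where the care lies.
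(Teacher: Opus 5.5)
Your argument is correct, but it is organized differently from the paper's. The paper does not induct on the dichotomy. It proves the clean form \cref{thm:nologs} through \cref{lem:nologs}, counting labeled $(i+1)$-cliques as an $(i-1)$-clique plus an edge in its common neighbourhood. It applies Tur\'an there and Jensen to the convex function $F$, which gives $m_{i+1}/m_i\ge \frac1n\,m_i/m_{i-1}$ whenever the previous ratio is large enough. Under $N\ge Cn^{k-1}$ every ratio satisfies $m_{i+1}/m_i\ge Cn^{k-1-i}$, so Tur\'an is always applied above its threshold. Hence no case split is ever needed, and one gets $m_k\ge (1-\frac1C)N^kn^{-\binom k2}$. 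The two regimes then follow from the same algebra you carry out in your final step. You instead peel off a single vertex and apply the whole $(k-1)$-statement inside each link $N(v)$. You combine this with Jensen for $x\mapsto x^{k-1}$ over the high-degree vertices, and use the high/low-degree split to absorb links below the threshold $n^{k-2}$. Your observation that $d_v\ge Cn^{k-2}$ forces $t_v\ge d_v^{(k-1)/2}$, and hence the second alternative, is exactly right. Your route proves the dichotomy directly in its own form over the whole range of $N$, but the constants compound through $c_{k-1}$ at every level and each level repeats the case analysis. The paper's bottom-up ratio inequality is uniform across levels, yields the explicit constant $1-\frac1C$, and is precisely the step that \cref{lem:logboost} later upgrades with the AKS bound to gain a factor of $\log n$ per level. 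One wording slip: ``The first regime's bound therefore holds in every case'' is false as written, since for large $t$ the value of $n$ can be far below $N^{1/(k-1)}$. What you mean, and what your subsequent four-way case check correctly shows, is that the low-degree conclusion $n\gtrsim N^{1/(k-1)}$ suffices in either regime.
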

Up to the constant $c_k$, this is equivalent to the following statement using simple algebraic manipulations.

\begin{theorem}\label{thm:nologs} If $G$ is an $N$-vertex graph with no independent set of size $n$ and $N\geq 2n^{k-1}$, then $G$ contains at least $\frac 12{\binom{N}{k}}n^{-{\binom{k}{2}}}$ cliques of size $k$. Equivalently 
\[
f(N,k,n) \geq \frac 12{\binom{N}{k}}n^{-{\binom{k}{2}}}.
\]
\end{theorem}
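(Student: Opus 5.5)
The plan is to prove the statement by induction on $k$, with the inductive step obtained by summing $(k-1)$-clique counts over vertex neighbourhoods. For the base case $k=2$, the complement of $G$ is $K_n$-free. Tur\'an's Theorem then gives
\[
e(G)\ \ge\ \binom N2-\Big(1-\frac1{n-1}\Big)\frac{N^2}{2}\ =\ \frac{N^2}{2(n-1)}-\frac N2 .
\]
Since $N\ge 2n$, this is at least $\frac12\binom N2 n^{-1}$, which is the claim for $k=2$.

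For the step $k-1\to k$ with $k\ge3$, I would use the identity
\[
\#K_k(G)=\frac1k\sum_{v}\#K_{k-1}(G[N(v)]).
\]
Each induced graph $G[N(v)]$ still has no independent set of size $n$. Set $T=2n^{k-2}$.
\begin{itemize}
\item If $d(v)\ge T$, the induction hypothesis gives $\#K_{k-1}(G[N(v)])\ge \frac12\binom{d(v)}{k-1}n^{-\binom{k-1}{2}}$.
\item If $d(v)<T$, I only use the trivial bound $0$.
\end{itemize}
This yields
\[
\#K_k(G)\ \ge\ \frac{n^{-\binom{k-1}2}}{2k}\sum_v \phi(d(v)),\qquad \phi(d)=\binom{d}{k-1}\mathbf 1[d\ge T].
\]
Since $\binom{k}{2}=\binom{k-1}{2}+(k-1)$, it then suffices to show
\[
\frac1N\sum_v \phi(d(v))\ \ge\ \binom{N-1}{k-1}n^{-(k-1)}.
\]
The point is that the required average degree is about $N/n$, and $N/n\ge 2n^{k-2}=T$. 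So the hypothesis $N\ge 2n^{k-1}$ is exactly what puts the relevant degree scale above the truncation threshold.

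To bound $\sum_v\phi(d(v))$, I would replace $\phi$ by its convex minorant $\psi(d)=\max\{0,\binom{d}{k-1}-\binom{T}{k-1}\}$. Jensen's inequality then gives $\sum_v\psi(d(v))\ge N\psi(\bar d)$. The base case supplies the average degree $\bar d\ge N/(n-1)-1$.

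The main obstacle is the constants. Applied naively, the bound $\binom{N/(n-1)-1}{k-1}$ loses against the target $\binom{N-1}{k-1}n^{-(k-1)}$ in lower-order terms. The subtracted term $\binom{T}{k-1}$ is only a $2^{-(k-1)}$ fraction of the main term at the threshold $N=2n^{k-1}$, which is not negligible. Together with the factor $\frac12$ lost at each inductive level, this could leave a final constant below $\frac12$.

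My first remedy is to strengthen the induction hypothesis. I would prove the sharper bound
\[
\#K_k\ \ge\ \binom Nk n^{-\binom k2}\Big(1-c\,\frac{n^{k-1}}{N}\Big)
\]
for a suitable explicit $c$, so that the losses become additive error terms proportional to $n^{k-1}/N$ rather than multiplicative factors. The claimed $\frac12$ at $N\ge 2n^{k-1}$ would then follow with room to spare.

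If the constant still fails near the threshold, there is a fallback. In that range I would use the fact that the low-degree set $L=\{v:d(v)<T\}$ spans a graph of maximum degree below $T$. Hence $G[L]$ has an independent set of size $|L|/T$, which forces $|L|<nT=2n^{k-1}$. This bounds the number of vertices excluded by the truncation. I would then average only over $V\setminus L$, using Tur\'an's Theorem inside $V\setminus L$ to recover the degree lower bound there.
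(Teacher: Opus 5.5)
\textbf{There is a genuine gap, and it is not just in the constants.} Your step ``it then suffices to show $\frac1N\sum_v\phi(d(v))\ge\binom{N-1}{k-1}n^{-(k-1)}$'' is false at the threshold.

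Tur\'an only guarantees $\bar d\ge \frac{N}{n-1}-1$. At $N=2n^{k-1}$ this is $T(1+O(1/n))$, so the cutoff sits exactly at the typical degree. Hence $\binom{T}{k-1}$ is essentially the whole main term, not a $2^{-(k-1)}$ fraction, and $\psi(\bar d)$ is only about $\frac{k-1}{n}\binom{T}{k-1}$.

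No cleverer averaging rescues the reduction. Let $G$ be the disjoint union of $n-2$ copies of $K_T$ and one $K_{4n^{k-2}}$. Then $N=2n^{k-1}$ and $\alpha(G)=n-1$. Every vertex of a small clique has degree $T-1<T$ and contributes $0$, so $\frac1N\sum_v\phi(d(v))$ is roughly $2^k/n$ times the required quantity.

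The factor $\frac12$ is not the problem: it passes through your reduction exactly once and does not compound. The hard cutoff is what kills the argument. Your fallback is vacuous at the same point: $|L|<nT=2n^{k-1}=N$ says nothing, and in this example $V\setminus L$ has only $4n^{k-2}$ vertices.

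\textbf{Your first remedy is the right repair, but verifying it is the entire proof, and there is no room to spare.} To get $\frac12$ at $N=2n^{k-1}$ you need $c\le 1$. But $c<1$ is already false for $k=2$, since at $N=n$ a single edge suffices. So you must show the induction closes with $c=1$ exactly, applying the hypothesis to every neighbourhood with no cutoff.

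The cleanest way is with labeled counts. Claim: $m_k(H)\ge n^{-\binom k2}\big(x^k-n^{k-1}x^{k-1}\big)$ for every $x$-vertex graph $H$ with $\alpha(H)<n$.
The case $k=2$ is Tur\'an. For the step, write $m_k(G)=\sum_v m_{k-1}(G[N(v)])$. The function $x\mapsto\max\{0,x^{k-1}-n^{k-2}x^{k-2}\}$ is convex and nondecreasing. So Jensen together with $\bar d\ge\frac{N}{n-1}-1\ge\frac Nn$ (valid for $N\ge n(n-1)$; below that there is nothing to prove) gives the claim for $k$ with no loss. At $N\ge 2n^{k-1}$, divide by $k!$ and use $N^k/k!\ge\binom Nk$.

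This is the paper's argument reorganised. Lemma~\ref{lem:nologs} applies Jensen over $(k-1)$-cliques, with Tur\'an inside their common neighbourhoods. The proof then multiplies the ratios $m_{i+1}/m_i$, which likewise confines the only loss to the last step. The ratio form is also what later lets the paper insert the Ajtai--Koml\'os--Szemer\'edi logarithmic gain level by level.
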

The latter form is perhaps cleaner, more compatible with our proof techniques, and illustrates the tightness up to log factors with the random graph. Indeed, if $N=\poly(n)$ and $p=\Theta(\log n/n)$ then $\bG(N,p)$ has independence number $O(n)$ and contains on the order of
\begin{equation}\label{eq:gdefn}
g(N,k,n)\coloneqq 
{\binom{N}{k}}\Big(\frac{\log n}{n}\Big)^{{\binom{k}{2}}}
\end{equation}
cliques of size $k$. The random graph implies $f(N,k,n)\leq O_k(g(N,k,n))$ for $N=\poly(n)$, and \cref{lem:blowups} generalizes this bound to all $N\geq 1$. \cref{thm:nologs} leaves a large polylogarithmic gap. Our main theorem will sharpen the bounds on the transition point $\tilde\Theta(n^{k-1})$ and show that $f$ is actually much closer to $g$. Our result is new for $k\geq 4$.

\begin{theorem}\label{thm:main} For all $k\geq 2$, there exist $c_1,c_2>0$ such that if $G$ is an $N$-vertex graph with no independent set of size $n$ and $N\geq c_1 n^{k-1}/\log^{k-2}n$, then $G$ contains at least $c_2\cdot g(N,k,n)/\log n$ cliques of size $k$. Equivalently,
\[
f(N,k,n)\geq c_2{\binom{N}{k}}\Big(\frac{\log n}{n}\Big)^{\binom{k}{2}}/\log n.
\]
\end{theorem}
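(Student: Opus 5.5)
We argue by induction on $k$, with a supersaturation statement strong enough to carry itself. The base cases are $k=2$ (Turán: an $N$-vertex graph with $\alpha<n$ has at least about $N^2/(2n)$ edges, which exceeds $g(N,2,n)/\log n=\binom N2\frac{\log n}{n}/\log n$) and $k=3$ (Bohman--Mubayi). For the inductive step we count $k$-cliques through their $(k-1)$-cliques and vertex neighbourhoods. Each vertex $v$ has a neighbourhood $N(v)$ that again has no independent set of size $n$. So the number of $k$-cliques equals $\frac1k\sum_v \#K_{k-1}(G[N(v)])$, and we can apply the inductive hypothesis to every neighbourhood that is large enough, namely of size at least $c_1 n^{k-2}/\log^{k-3}n$.

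The key quantitative input is a local-sparsity estimate in the style of Ajtai--Komlós--Szemerédi and Shearer. If most neighbourhoods were small, say all of size at most $d$ with $d\ll n^{k-2}/\log^{k-3}n$, then the graph would be locally $K_k$-sparse in the appropriate sense. The Li--Rousseau--Zang / AKS bound for graphs with few cliques in neighbourhoods then gives an independent set of size $\gtrsim \frac{N}{d}\log d$. That contradicts $\alpha<n$ once $N\ge c_1 n^{k-1}/\log^{k-2}n$.

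More precisely, I would prove the following weighted version. Choose a threshold $d_0=\Theta(n^{k-2}/\log^{k-3} n)$. If the total degree carried by vertices of degree at least $d_0$ were small, we could delete those vertices and apply the AKS/Shearer independence bound to the remaining graph, where every neighbourhood has few $(k-1)$-cliques by the contrapositive of induction. This yields $\alpha\ge n$. Hence many vertices have degree $d_v\ge d_0$. For such $v$, induction gives
\[\#K_{k-1}(N(v))\ge c_2\binom{d_v}{k-1}\Big(\frac{\log n}{n}\Big)^{\binom{k-1}{2}}/\log n.\]

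Summing and using convexity (Jensen) together with the degree lower bound $\sum_v d_v\gtrsim N^2\log n/n$ yields the target. To get that degree bound, we use that the average degree must be about $N\log n/n$. If it were smaller, then, granted local sparsity, the AKS bound would again produce an independent set of size at least $n$. The exponents combine as
\[
N\Big(\frac{N\log n}{n}\Big)^{k-1}\Big(\frac{\log n}{n}\Big)^{\binom{k-1}2}=N^k\Big(\frac{\log n}{n}\Big)^{\binom k2}.
\]

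The main obstacle is the dichotomy step: guaranteeing simultaneously that typical degrees are $\gtrsim N\log n/n$ and that neighbourhoods are large enough to invoke induction, while losing only a single $\log n$ factor overall rather than one per level of induction. The induction must not compound the $1/\log n$ loss. To handle this, I would make the inductive hypothesis state the random-like count $g(N,k-1,n)$ whenever neighbourhoods are "typical," with the $1/\log n$ loss arising only once. That loss comes from using the AKS bound in the form $\alpha\gtrsim \frac{N}{d}\log(d/\text{clique density})$, where the log of a ratio rather than $\log d$ yields exactly one lost logarithm. I would first try a direct argument: pass to the subgraph of vertices whose neighbourhoods have fewer than the target number of $(k-1)$-cliques, and apply the Shearer-type bound for graphs whose neighbourhoods contain few triangles or cliques.
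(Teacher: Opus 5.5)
Your skeleton is sound: write $k\cdot\#K_k=\sum_v\#K_{k-1}(G[N(v)])$, apply the $(k-1)$ case inside neighbourhoods, then use Jensen once the average degree is $\gtrsim N\log n/n$. Your own exponent computation shows that only the single $1/\log n$ inherited from the Tur\'an base case is lost, so compounding is not the real obstacle.

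The genuine gap is the claim that small degrees, or average degree below $N\log n/n$, contradict $\alpha(G)<n$. This is false. With $N=c_1n^{k-1}/\log^{k-2}n$, take $n-1$ disjoint cliques of size $N/(n-1)$. Then:
\begin{itemize}
\item $\alpha=n-1$;
\item every degree is about $c_1n^{k-2}/\log^{k-2}n$, which is below your $d_0$, so no vertex qualifies for the inductive step;
\item the average degree is only about $N/n$.
\end{itemize}
This graph has roughly $N^k/n^{k-1}$ copies of $K_k$, so the theorem holds for it, but your argument never reaches that conclusion.

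The AKS, Shearer and Li--Rousseau--Zang bounds give the extra logarithm only under local sparsity in the sense of few edges inside neighbourhoods, i.e.\ few triangles. Nothing in your hypotheses supplies this. The sentence ``every neighbourhood has few $(k-1)$-cliques by the contrapositive of induction'' is a non sequitur: the inductive hypothesis only gives lower bounds on clique counts of large vertex sets. Moreover, few $K_{k-1}$'s in a neighbourhood would not mean few edges there.

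What is missing is a dichotomy whose second branch counts cliques instead of contradicting $\alpha<n$. The paper uses the triangle-count form of AKS, \cref{cor:AKS}: either $m_2\ge c\frac{\log n}{n}N^2$ or $m_3\ge N^3/n^{2+\eps}$.
\begin{itemize}
\item In the first branch your induction plus Jensen goes through. Discard vertices of degree below $\bar d/2$ so that every remaining neighbourhood is above the inductive threshold.
\item In the second branch you must show directly that $G$ has far more than $g(N,k,n)$ copies of $K_k$. Iterate Tur\'an inside common neighbourhoods (\cref{lem:nologs}, \cref{cor:nologs}) starting from $m_3/m_2\gtrsim N/(n^{1+\eps}\log n)$. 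This gives $m_{i+1}/m_i\gtrsim N/(n^{i-1+\eps}\log n)$ for $2\le i\le k-1$, hence $m_k\ge n^{(k-2)(1-\eps)-o(1)}N^kn^{-\binom k2}\gg g(N,k,n)$.
\end{itemize}

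The paper runs exactly this dichotomy at every level simultaneously, on common neighbourhoods of cliques. It does so through \cref{lem:logboost} and the critical index $j$ in \cref{thm:maininductive}, using plain Tur\'an only at the final step, where the one $\log n$ is lost. With the triangle-rich branch added, your vertex-by-vertex induction would be a legitimate alternative route. As written, however, it has no argument for locally dense graphs.
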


This is a quantitative supersaturation refinement of the bound of Ajtai, Koml\'os, and Szemer\'edi on $r(k,n)$, because in particular $f>0$ for $N=c_1n^{k-1}/\log^{k-2}n$. We are unable to recover the $(1+o(1))$ constant factor of Equation \eqref{eq:ramseybounds} to get a sharp threshold; our bound on $c_1$ is exponential in $k$.

When $k=2$, this is implied by Tur\'an's theorem. When $k=3$, this is equivalent to Theorem 2 in \cite{Bohman_Mubayi_2019} by similar algebraic manipulations. It is tight up to constants for $k=2,3,$ and we prove in Section \ref{sec:upperbounds} that it is tight when $k=4$.

This is an existence-to-abundance supersaturation result which significantly exceeds the bounds obtained by standard averaging. It is surprising that even though $\bG(N,p)$ is a poor $(k,n)$-Ramsey graph, once we grow even a constant multiple beyond the known Ramsey bounds, $\bG(N,p)$ becomes essentially the best possible at avoiding $K_k$. This suggests that specialized quasirandom constructions obtained by gluing efficient Ramsey graphs are not particularly effective at minimizing $k$-clique density.

With a natural additional condition, we can remove the remaining logarithmic factor. 

\begin{theorem}\label{thm:extralog}
For $k\geq 2$ and $\eps>0$, if $G$ is an $N$-vertex graph with no independent set of size $n$ and $N\geq n^{k-2+\eps}$, and additionally $G$ contains {at most} $n^{1-\eps}\cdot g(N,k,n)$ cliques of size $k$, then $G$ contains $\Omega_{k,\eps}(g(N,i,n))$ cliques of size $i$ for all $i<k$.
\end{theorem}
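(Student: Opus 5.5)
We argue by induction on $i$, moving upward from $i=1$, and we prove the bound on $i$-cliques by looking at neighbourhoods of $(i-1)$-cliques. For a clique $S$ on $i-1$ vertices, write $N(S)$ for its common neighbourhood and $d(S)=|N(S)|$. Two identities are basic here. First, $\sum_{|S|=i-1} d(S)=i\cdot K_i(G)$. Second, $\sum_{|S|=i-1}K_{k-i+1}(G[N(S)])=\binom{k}{i-1}K_k(G)$. Every induced subgraph $G[N(S)]$ still has independence number less than $n$. The cases $i=1$ (vertex count $N$) and $i=2$ are immediate, the latter from Tur\'an's theorem, since $e(G)\ge N^2/(2n)$; this is $\Omega(g(N,2,n))$ only after gaining a $\log n$. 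So the real content is a \emph{local} dichotomy. Either a typical $(i-1)$-clique $S$ has $d(S)\gtrsim N(\log n/n)^{i-1}$, or its neighbourhood carries many $(k-i+1)$-cliques.

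The plan is to prove, by induction on $k-j$, a neighbourhood statement. Let $H$ be an $M$-vertex graph with $\alpha(H)<n$ and $M\ge n^{j-2+\eps}$, and suppose $H$ has few $j$-cliques, namely at most $n^{1-\eps}g(M,j,n)$. Then $H$ has average degree $\Omega(M\log n/n)$. The logarithmic gain comes from the Ajtai--Koml\'os--Szemer\'edi/Shearer local argument. If $H$ had average degree $d\ll M\log n/n$, then every vertex neighbourhood has at most $n$ vertices but must be sparse in $(j-1)$-cliques. Indeed, by the second identity and the hypothesis, the average neighbourhood contains about $K_j(H)\cdot j/M\le n^{1-\eps}g(M,j,n)/M$ many $(j-1)$-cliques. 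After normalising, this is a polynomial factor $n^{-\eps'}$ below the random count for a set of size $d$. Applying the induction hypothesis (or its contrapositive) to the neighbourhood shows the neighbourhoods are ``locally sparse''. Shearer's entropy/weighted-independent-set bound for locally sparse graphs then gives $\alpha(H)\ge \Omega(M\log d/d)$. When $d\ll M\log n/n$ this exceeds $n$, which is a contradiction. Note that $\log d=\Theta(\log n)$ because $N\ge n^{k-2+\eps}$ forces polynomial sizes; this is where the $n^{\eps}$ slack is used.

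With the degree statement in hand, we iterate through the first identity. Apply it to $H=G[N(S)]$ for $(i-1)$-cliques $S$ and count $K_i$ as $\frac1i\sum_S d(S)$. A typical $N(S)$ has size about $N(\log n/n)^{i-2}$, by induction, and few $(k-i+2)$-cliques, by the averaging identity together with the global hypothesis on $K_k(G)$. The degree statement then gives average degree $\Omega(|N(S)|\log n/n)$ inside $N(S)$, which adds one factor of $\log n/n$ at each level. To handle ``typical'', we restrict to the $(i-1)$-cliques whose neighbourhoods have at most a constant multiple of the average number of high cliques, via Markov. Their neighbourhoods must also be large, and we use convexity (Jensen on $d(S)$) to show that the contribution of these good $S$ is still a constant fraction.

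The main obstacle is the local-sparsity step, that is, converting ``few $(j-1)$-cliques in neighbourhoods on average'' into the hypothesis needed for Shearer-type bounds. Those bounds normally require every neighbourhood to be $K_{j-1}$-free, or to have bounded clique density, uniformly. I would first try the Li--Rousseau--Zang weighted version, which only needs an averaged sparsity bound. I would combine it with deleting a small set of vertices lying in too many cliques. The polynomial room $n^{1-\eps}$ versus $n$ should absorb the losses from these deletions and from Markov.
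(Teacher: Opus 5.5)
The gap is in your local degree statement for $j\ge 4$. Your outer scheme (ratios built level by level through common neighbourhoods, with an AKS-type $\log n$ gain at each level) matches the paper's. But nothing in your argument gives the \emph{upper} bound on edges inside vertex neighbourhoods (i.e.\ on triangles of $H$) that Shearer, AKS or Li--Rousseau--Zang require.

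The induction hypothesis applied to $N(v)$, and equally its contrapositive, only says when $N(v)$ is \emph{too sparse}. It can never certify that $N(v)$ is sparse, so it points the wrong way. Your normalisation is also off. With $d=cM\log n/n$, the hypothesis gives on average $jK_j(H)/M\approx c^{-(j-1)}n^{1-\eps}g(d,j-1,n)$ cliques of size $j-1$ in $N(v)$. That is a factor $n^{1-\eps}$ \emph{above} the random count for a $d$-set, not $n^{-\eps'}$ below it. Also, neighbourhoods have fewer than $n$ vertices only when $H$ is triangle-free. For $j=3$ none of this matters: the $(j-1)$-cliques in $N(v)$ are edges, the bound is a triangle bound, and \cref{thm:AKS} finishes (this is exactly \cref{cor:AKS}). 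For $j\ge 4$, however, a bound of $n^{1-\eps}$ times random on $(j-1)$-cliques in $N(v)$ says nothing directly about the edge density of $N(v)$.

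The missing ingredient is supersaturation in the dense direction: an excess of triangles forces an excess of $k$-cliques. The paper gets this from \cref{lem:nologs} and \cref{cor:nologs}, which use Tur\'an's theorem in each common neighbourhood plus convexity to give $m_{i+1}/m_i\ge \frac1n\, m_i/m_{i-1}$. It then applies \cref{thm:AKS} inside the common neighbourhoods of $(i-1)$-cliques. That theorem only needs the \emph{global} triangle count, so the uniformity issue you flag as the main obstacle never arises. The result is the dichotomy of \cref{lem:logboost}: either $m_{i+1}/m_i$ gains the factor $c\log n/n$, or $m_{i+2}/m_i$ exceeds its random value by a factor $n^{1-O(\eps)}$. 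The second branch is not refuted locally. Instead, \cref{cor:nologs} carries that polynomial gain up to level $k$, giving $m_k\gg n^{1-\eps}g(N,k,n)$ and contradicting the hypothesis. So the first branch holds at every level $i\le k-2$ (\cref{thm:maininductive}). This uses the bound on $K_k(G)$ only once, at the top. You therefore need neither your auxiliary statement for general $j$ nor the Markov/Jensen bookkeeping for pushing the global clique bound into individual neighbourhoods.
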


In particular, this theorem applies if $G$ is $K_{k}$-free, giving structural information about all efficient $(k,n)$ Ramsey graphs. 
The conclusion of \cref{thm:extralog} is clearly tight (depending on the constraints on $N$ and $m_{k}$) by taking $\bG(N,p)$; one might wonder if \cref{thm:main} could similarly be improved to remove the extra log. This is false for $k=2,3,4$, and using the 
construction of Bohman and Mubayi we show that depending on $r(k,n)$, the logarithmic loss may be genuine for all $k$.

\begin{theorem}\label{thm:tightness} Fix $k\geq 5$. If it holds that 
\[r(k,n)=\Theta_k(n^{k-1}/\log^{k-2}n),\]
then \cref{thm:main} is tight up to the choice of $c_1$ and $c_2$. In particular, there exist $a_1,a_2$ such that for all $N\leq a_1n^{k-1}/\log^{k-2}n$ we have $f(N,k,n)=0$, and for all $N\geq a_1n^{k-1}/\log^{k-2}n$ we have
\[
f(N,k,n)\leq a_2{\binom Nk}\Big(\frac{\log n}{n}\Big)^{\binom k2}/\log n.
\]
\end{theorem}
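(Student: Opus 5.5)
The plan is to build the upper-bound graphs by a clique blow-up of one extremal Ramsey graph at the threshold. Write $p=\log n/n$ and $N_0=a_1n^{k-1}/\log^{k-2}n$, with $a_1$ chosen so that $N_0<r(k,n)$; this is possible by the hypothesis $r(k,n)=\Theta_k(n^{k-1}/\log^{k-2}n)$. Then $f(N,k,n)=0$ for $N\le N_0$, which gives the first half of the statement. For $N\ge N_0$, fix a $K_k$-free graph $G_0$ on $N_0$ vertices with no independent set of size $n$. Let $t=\lceil N/N_0\rceil$, and let $G$ be obtained from $G_0$ by replacing each vertex with a clique of size $t$, joining two blobs completely when the corresponding vertices of $G_0$ are adjacent; we then delete vertices to reach exactly $N$. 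Any independent set in $G$ meets each blob in at most one vertex and projects to an independent set of $G_0$, so $\alpha(G)=\alpha(G_0)<n$.

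Next we count $k$-cliques in $G$. A $k$-clique projects onto a clique $S$ of $G_0$, and $|S|=i\le k-1$ because $G_0$ is $K_k$-free. The number of $k$-cliques projecting onto a given $S$ is at most $\binom{it}{k}\le (kt)^k$. Writing $m_i(G_0)$ for the number of $i$-cliques of $G_0$, the number of $k$-cliques in $G$ is therefore at most $(kt)^k\sum_{i<k}m_i(G_0)$. Suppose we know $m_i(G_0)=O_k(g(N_0,i,n))=O_k(N_0^ip^{\binom i2})$. Since $t^kN_0^k=\Theta(N^k)$, the target is
\[
\Theta\Big(t^kN_0^kp^{\binom k2}/\log n\Big).
\]
Comparing the $i$-th term with the target gives the ratio
\[
\frac{N_0^kp^{\binom k2}/\log n}{N_0^ip^{\binom i2}}=\frac{N_0^{k-i}p^{\binom k2-\binom i2}}{\log n}.
\]
For $i=k-1$ this is exactly $N_0p^{k-1}/\log n=\Theta(1)$, by the choice of $N_0$; this is where the extra $\log n$ loss comes from. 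For smaller $i$, each additional factor has the form $N_0p^{j}$ with $j\le k-2$, and this is $n^{\Omega(1)}$ large, so the sum is dominated by $i=k-1$. This gives $f(N,k,n)\le a_2\binom Nk p^{\binom k2}/\log n$.

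The main obstacle is the assumption $m_i(G_0)=O(g(N_0,i,n))$ for all $i<k$. The Ramsey hypothesis only gives the existence of some $K_k$-free graph with small independence number, and no control on its lower clique counts. (Theorem~\ref{thm:extralog} gives matching lower bounds but not upper bounds.)

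I would try the following fixes in order. First, use the explicit Bohman--Mubayi-type or random-greedy construction behind the lower bound on $r(k,n)$, and check directly that its $i$-clique counts are random-like. Second, if that fails, pass to a random induced subgraph of the hypothesized Ramsey graph on a constant fraction of its vertices; this costs only a constant in $a_1$. By Markov, this subgraph has at most a constant times its expected number of $i$-cliques. It then remains to bound the $i$-clique counts of an arbitrary extremal $G_0$ by an averaging and deletion argument: if $m_{k-1}(G_0)$ were much larger than $g(N_0,k-1,n)$, one would hope to find a vertex whose neighbourhood is $K_{k-1}$-free with no independent set of size $n$ but too large, contradicting the Ramsey bound for $k-1$, which is known. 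This last step is the one I am least sure of and would examine first.
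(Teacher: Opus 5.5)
Your blow-up architecture, the projection count, and the observation that the $(k-1)$-clique term produces the $1/\log n$ loss all match the paper. The genuine gap is the step you flag yourself. You never establish $m_i(G_0)=O(g(N_0,i,n))$ for $i<k$, and without it the counting argument proves nothing.

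Your first two fixes do not close this gap:
\begin{itemize}
\item No known construction attains the hypothesised order of $r(k,n)$; Brada\v{c}'s and Mattheus--Verstraete's constructions lose polylogarithmic factors. Using one would make $N_0$ too small by a power of $\log n$. Then your ratio $N_0p^{k-1}/\log n$ becomes $o(1)$, and the $(k-1)$-clique term exceeds the target. Because the theorem is conditional, the argument has to work for an \emph{arbitrary} witness $G_0$.
\item Passing to a random induced subgraph only rescales each $m_i$ by the sampling fraction to the $i$th power. It cannot turn an uncontrolled count into a controlled one.
\end{itemize}

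Your third idea points the right way, but it needs no averaging, deletion or contradiction. The missing observation is deterministic and holds pointwise. For any $j$-clique $(v_1,\ldots,v_j)$ of $G_0$ with $j\ge 1$, the common neighbourhood $N(v_1,\ldots,v_j)$ induces a $K_{k-j}$-free graph with no independent set of size $n$. Hence $|N(v_1,\ldots,v_j)|<r(k-j,n)=O(n^{k-j-1}/\log^{k-j-2}n)$, by the \emph{unconditional} Ajtai--Koml\'os--Szemer\'edi upper bound. Choosing $v_1,\ldots,v_i$ one at a time gives, for every such $G_0$,
\[
m_i(G_0)\le N_0\prod_{j=1}^{i-1}r(k-j,n)=O_{a_1}\big(N_0^{i}p^{\binom i2}\big).
\]
The last step uses $N_0p^{j}=a_1n^{k-1-j}/\log^{k-2-j}n$, so each factor $r(k-j,n)$ is $O(N_0p^j/a_1)$. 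With this bound, your blow-up count goes through as written. The paper does exactly this with $R=r(k,n)-1$ in place of your $N_0$, and then applies its general blow-up lemma instead of your direct $t=\lceil N/N_0\rceil$ blow-up. These two final steps are equivalent.
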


This theorem states that the tightness of Ajtai, Koml\'os, and Szemer\'edi's bound $r(k,n)$ would imply the sharpness of \cref{thm:main}. Equivalently, any asymptotic improvement to \cref{thm:main} for any $N\geq 0$ would improve the upper bound for $r(k,n)$. 

Additionally, it is straightforward using our machinery to show that within the tighter regime of $N=\Theta(n^{k-1}/\log^{k-2}n)$, any asymptotic improvement to \cref{thm:main} would propagate forward via \cref{lem:logboost} to improve the bounds on $r(K,n)$ for all $K>k$. This demonstrates the significant barrier to making any improvements.

\subsection{Warm-up: the basic argument}\label{sec:introproof}

Our proofs only rely on the counts of edges and triangles in graphs with small independence number. We then bootstrap this inductively by averaging over ways to extend an $i$-clique into an $i+1$-clique. In this section, we provide a new proof of \cref{thm:nologs} to demonstrate the basic methods used for the main result. 

\begin{definition}For a graph $G$, let $m_k(G)$ be the number of labeled copies of $K_k$ in $G$; we simply write $m_k$ when $G$ is implicit. Let $S_k$ be the set of such tuples $\vec v=(v_1,\ldots, v_k)$ forming a $k$-clique. Our theorems concern the quantity $m_k/k!$. For $\vec v\in S_k$, let $N(\vec v)$ denote the common neighborhood of $v_1,\ldots, v_k$. For the induction, we have $S_0=\{\emptyset\}$ and $N(\emptyset)=V(G)$.
\end{definition}

Our proof centers around an inductive lemma. We define 
\[
F(x,n) \coloneqq {(n-1)}\binom{x/(n-1)}{2}
=\frac{x^2}{2(n-1)}\Big(1-\frac{n-1}{x}\Big)
\]
By Tur\'an's theorem, $F(N,n)\leq f(N,2,n)$, i.e. every $N$-vertex graph with no independent set of size $n$ contains at least $F(N,n)$ edges. When $N\geq n(n-1)$ we have $F(N,n)\geq N^2/2n$. In the following we choose to work with $F$ because it is convex in $N$. The following lemma relates the clique counts in $G$.

\begin{lemma}\label{lem:nologs} Let $G$ be an $N$-vertex graph with no independent set of size $n$. If $m_{k-1}>0$, then
\[
m_{k+1}\geq m_{k-1} \cdot 2F\Big(\frac{m_k}{m_{k-1}},n\Big)
\]
\end{lemma}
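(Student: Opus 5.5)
Fix a $(k-1)$-clique $\vec v\in S_{k-1}$ and write $d(\vec v)=|N(\vec v)|$. We will work inside the common neighbourhood $N(\vec v)$. Labeled $k$-cliques extending $\vec v$ by one final coordinate correspond to vertices of $N(\vec v)$. Labeled $(k+1)$-cliques extending $\vec v$ by two final coordinates $(u,w)$ correspond to ordered edges of the induced graph $G[N(\vec v)]$. Summing over $\vec v$ gives
\[
m_k=\sum_{\vec v\in S_{k-1}} d(\vec v),\qquad m_{k+1}=\sum_{\vec v\in S_{k-1}} 2e\big(G[N(\vec v)]\big),
\]
since every labeled $k$-clique (resp.\ $(k+1)$-clique) has a unique prefix of length $k-1$. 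The base case $k=1$ is covered by the convention $S_0=\{\emptyset\}$ with $N(\emptyset)=V(G)$.

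Next we bound each term from below. The induced subgraph $G[N(\vec v)]$ has $d(\vec v)$ vertices and, as an induced subgraph of $G$, no independent set of size $n$. By Tur\'an's theorem, in the form stated just before the lemma, $e(G[N(\vec v)])\geq F(d(\vec v),n)$. We need this to hold for every real value $x=d(\vec v)\geq 0$, including $x<n-1$, where $F$ as written becomes negative.

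The convexity step is then immediate. We have
\[
m_{k+1}\geq 2\sum_{\vec v\in S_{k-1}} F(d(\vec v),n)\geq 2m_{k-1}F\Big(\frac{1}{m_{k-1}}\sum_{\vec v} d(\vec v),n\Big)=2m_{k-1}F\Big(\frac{m_k}{m_{k-1}},n\Big),
\]
by Jensen's inequality, using that $x\mapsto F(x,n)=\frac{x^2}{2(n-1)}-\frac x2$ is a convex quadratic in $x$. This needs $m_{k-1}>0$ so that the average is defined.

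The only point needing care is the Tur\'an bound in the regime of small or non-multiple $x$. Tur\'an's theorem gives at least $(n-1)\binom{x/(n-1)}{2}$ edges when the $x$ vertices are split as evenly as possible into $n-1$ cliques; this uses the convexity of $\binom{y}{2}$, valid for real $y$. When $x<n-1$ the bound is negative and hence trivially true. So the inequality $e\geq F(x,n)$ holds for all nonnegative integers $x$, and Jensen may be applied to the convex quadratic extension of $F$ to all reals. I expect no real obstacle beyond verifying that this extension is consistent with the convexity used.
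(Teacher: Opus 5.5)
Your proof is correct and is the paper's argument: you write $m_k$ and $m_{k+1}$ as sums over prefixes $\vec v\in S_{k-1}$, apply the Tur\'an bound $e(G[N(\vec v)])\ge F(|N(\vec v)|,n)$ in each common neighborhood, and finish with Jensen's inequality for the convex quadratic $F(\cdot,n)$. Your extra check that the Tur\'an bound holds trivially when $|N(\vec v)|<n-1$ (where $F\le 0$) is a detail the paper leaves implicit.
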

Evaluating $F$ directly yields the following useful versions of the inequality, under different assumptions on the clique ratios.
\begin{corollary}\label{cor:nologs}
In particular, if $m_k/m_{k-1}\geq n^2$ then
\begin{align}\label{eq:nologs}
\frac{m_{k+1}}{m_k} &\geq \frac{1}{n} \frac{m_k}{m_{k-1}},\\
\shortintertext{and if instead \text{$m_k/m_{k-1}\geq Cn$} then
}
\frac{m_{k+1}}{m_k} &\geq \frac{1}{n} \frac{m_k}{m_{k-1}}\Big(1-\frac{1}{C}\Big).\notag
\end{align}
\end{corollary}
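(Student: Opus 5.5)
The statement to prove is \cref{cor:nologs}. The plan is to plug $x=m_k/m_{k-1}$ into \cref{lem:nologs}, use the closed form of $F$, and divide by $m_k$. The closed form gives
\[
2F(x,n)=\frac{x^2}{n-1}\Big(1-\frac{n-1}{x}\Big).
\]
Substituting into \cref{lem:nologs} yields
\[
m_{k+1}\geq m_{k-1}\,\frac{x^2}{n-1}\Big(1-\frac{n-1}{x}\Big)=m_k\,\frac{x}{n-1}\Big(1-\frac{n-1}{x}\Big),
\]
because $m_{k-1}x^2=m_k x$. Dividing by $m_k$, which is positive when $x>0$, gives
\[
\frac{m_{k+1}}{m_k}\geq \frac{x}{n-1}-1.
\]
This identity is the only input; both displayed inequalities follow by elementary estimates. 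The hypothesis $m_{k-1}>0$ of the lemma holds because $m_k/m_{k-1}$ appears in the statement.

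For the second inequality, assume $x\geq Cn$. Then
\[
\frac{x}{n-1}-1\geq \frac{x}{n}-1=\frac{x}{n}\Big(1-\frac{n}{x}\Big)\geq \frac{x}{n}\Big(1-\frac1C\Big),
\]
using $\tfrac1{n-1}\ge\tfrac1n$ and $n/x\leq 1/C$. This is exactly the claimed bound.

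For the first inequality, assume $x\geq n^2$. Here the slack between $1/(n-1)$ and $1/n$ must absorb the $-1$, so we cannot just drop it. We have
\[
\frac{x}{n-1}-\frac{x}{n}=\frac{x}{n(n-1)}\geq \frac{n^2}{n(n-1)}>1.
\]
Hence $\frac{x}{n-1}-1\geq \frac{x}{n}$, which is \eqref{eq:nologs}. Routing through the second inequality with $C=n$ would lose a factor $(1-1/n)$, so the direct comparison is needed.

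The only delicate point is this exact constant in the first case, which the $n-1$ versus $n$ slack handles. One should also note $n\geq 2$ so that $F$ is well defined; the case $n=1$ is vacuous since every graph on at least one vertex has an independent set of size $1$.
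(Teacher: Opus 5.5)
Your proposal is correct and follows the paper's route: the paper justifies the corollary only by ``evaluating $F$ directly'' in \cref{lem:nologs}, which amounts to your identity $2F(x,n)=\frac{x^2}{n-1}-x$, giving $\frac{m_{k+1}}{m_k}\geq \frac{x}{n-1}-1$ with $x=m_k/m_{k-1}$. Your treatment of the first case via the slack $\frac{x}{n(n-1)}\geq 1$ is the same fact the paper records as $F(x,n)\geq x^2/(2n)$ whenever $x\geq n(n-1)$.
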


The idea of this lemma is that if we build cliques one vertex at a time, each successive neighborhood has density $\sim 1/n$ by Tur\'an's theorem, so the number of choices does not shrink by too much.

\begin{proof}
We enumerate $S_{k-1}$ by choosing $v_1,\ldots, v_{k-1}$, then counting the number of choices for $v_k$ and $v_{k+1}$. This gives
\[
m_{k+1}=\sum_{\vec v\in S_{k-1}}2e(N(\vec v))\geq \sum_{\vec v\in S_{k-1}}2F(|N(\vec v)|,n).
\]
Observe that by counting the choices for only $v_k$, we have $m_k=\sum_{\vec v\in S_{k-1}}|N(\vec v)|$, and thus by convexity of $F$,
\[
m_{k+1}\geq |S_{k-1}|\cdot 2F\bigg(\frac{\sum_{\vec v}|N(\vec v)|}{|S_{k-1}|},n\bigg)= m_{k-1}\cdot 2F\Big(\frac{m_k}{m_{k-1}},n\Big),
\]
as desired.
\end{proof}

This lemma reduces \cref{thm:nologs} to a short calculation. Since we control the ratios, we control the clique counts.

\begin{proof}[Proof of \cref{thm:nologs}] Suppose $N \geq Cn^{k-1}$ for $C>1$. Observe that $m_1/m_0=N$, and then by applying \cref{cor:nologs} inductively, for all $i\leq k-2$ we have $m_{i+1}/m_{i}\geq N/n^i\geq Cn^{k-1-i}$. Finally, since $m_{k-1}/m_{k-2}\geq Cn$, we have $m_k/m_{k-1}\geq (1-\frac 1C)N/n^{k-1}$. In conclusion
\begin{equation}
m_k=\prod_{i=0}^{k-1}\frac{m_{i+1}}{m_i} \geq \Big(1-\frac 1C\Big)\prod_{i=0}^{k-1}Nn^{-i} = \Big(1-\frac 1C\Big)N^kn^{-{\binom{k}{2}}}.\label{eq:nologfinal}
\end{equation}
Set $C=2$ and use $N^k/k!\geq \binom Nk$ to obtain the result.
\end{proof}

\section{Proofs of the main results}\label{sec:mainproof}

The main proofs follow much the same approach as Section \ref{sec:introproof}. The idea is that when we are ``building'' these cliques, we should be able to get a log improvement on the density of each successive neighborhood. Indeed, if the density at any ``level'' is less than $\asymp \frac{\log n}{n}$, then $G$ must be very structured and hence contains many large cliques. In our proofs, we omit floors and ceilings when they are not instructive. By choosing small constants, we may assume $n$ is sufficiently large. Also, in order to simplify notation we avoid choosing explicit constants, for example choosing $c,c',\eps>0$ ``sufficiently small".

We begin with the following theorem which can be obtained from the results of Ajtai, Koml\'os, and Szemer\'edi \cite{AKS}. This is also used in \cite[Theorem 3]{Bohman_Mubayi_2019} and \cite[Lemma 12.16]{Bollobas2001}.

\begin{theorem}[Ajtai--Koml\'os--Szemer\'edi]\label{thm:AKS}
There exists an absolute constant $C$ such that the following holds. If $G$ is an $N$-vertex graph with average degree $d$ and at most $t$ triangles, $t\geq N$, then
\[
\alpha(G) \geq \frac{CN}{d}\log\Big(\frac{d^2N}t\Big)
\]
\end{theorem}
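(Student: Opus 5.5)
We plan to deduce \cref{thm:AKS} from the triangle-free case of the Ajtai--Koml\'os--Szemer\'edi bound by random sparsification. That case, in Shearer's form with average degree, says a triangle-free graph on $M$ vertices with average degree $D\geq 1$ has $\alpha\geq c\,M\log D/D$ for an absolute $c>0$. It is an average-degree statement because Shearer's function $f(D)=(D\log D-D+1)/(D-1)^2$ is convex, so Jensen's inequality applies. The idea is to sample vertices at a rate that makes triangles rare compared to vertices, delete one vertex from each surviving triangle, and apply the triangle-free bound to what is left. The answer must come out as $\log(d^2N/t)$ rather than $\log d$ because sampling shrinks the degree from $d$ to $pd$, and the optimal $p$ gives $pd\asymp d\sqrt{N/t}$.

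\emph{Setup.} First dispose of the degenerate regime. If $d^2N/t\leq K$ for a large absolute constant $K$, then $\log(d^2N/t)\leq \log K$. Tur\'an's bound $\alpha\geq N/(d+1)$ then gives the claim once $C$ is small enough; the case $d<1$ is also covered by Tur\'an. So assume $d^2N/t> K$, and set $p=\sqrt{N/(8t)}$, which satisfies $p\leq 1$ because $t\geq N$. Let $W$ be a random subset containing each vertex independently with probability $p$. Then
\[
\mathbb E|W|=pN,\qquad \mathbb E\, e(G[W])=p^2Nd/2,\qquad \mathbb E\,\#\{\text{triangles in }W\}=p^3t\leq pN/8 .
\]
We delete one vertex from each triangle in $G[W]$ to obtain a triangle-free induced subgraph $H$ with $|H|\geq |W|-\#\text{triangles}$ and $e(H)\leq e(G[W])$.

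\emph{Choosing an outcome.} We need one outcome where $|H|\gtrsim pN$ and $e(H)\lesssim p^2Nd$ hold simultaneously. We take a linear combination of the expectations: $\mathbb E\big[|W|-\#\text{tri} - \tfrac{1}{4pd}e(G[W])\big]\geq pN(1-\tfrac18-\tfrac18)$. Some outcome therefore has $|H|\geq pN/2$ and $e(H)\leq 4p^2 Nd$. If instead $|H|$ is much larger, we pass to a random subset of it of the right size, which preserves the edge ratio in expectation. Hence $H$ has $M\geq pN/2$ vertices and average degree $D\leq 16pd$. If $D$ is below a constant, Tur\'an inside $H$ already gives $\alpha\gtrsim pN\geq N/(d\,\cdot)$; this is the situation where we need $pd$ large, which holds because $d^2N/t>K$. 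Applying the triangle-free bound, and using that $x\mapsto \log x/x$ is decreasing for $x\geq e$ (so we may replace $D$ by its upper bound $16pd$), we get
\[
\alpha(G)\geq\alpha(H)\geq c\,\frac{pN}{2}\cdot\frac{\log(16pd)}{16pd}\gtrsim \frac Nd\log(pd)=\frac Nd\cdot\tfrac12\log\Big(\frac{d^2N}{8t}\Big),
\]
which is $\geq C\frac Nd\log(d^2N/t)$ once $K$ is large.

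\emph{Main obstacle.} The main difficulty is the bookkeeping at the small end. We must ensure $pd$ exceeds a constant so the logarithm is positive and the monotonicity of $\log x/x$ applies. We must also control the vertex count and the edge count of $H$ simultaneously, which is what the linear-combination expectation trick does. Finally, we must handle the regime where $\log(d^2N/t)$ is bounded by falling back on Tur\'an. None of these steps is deep, but the constants $K$, $C$ and $c$ must be chosen in the right order: $K$ first, large in terms of $c$; then $C$ small in terms of $c$ and $K$.
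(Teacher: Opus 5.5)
Your proof is correct. The paper does not prove this statement at all: it imports it as a black box from Ajtai--Koml\'os--Szemer\'edi (pointing to Bohman--Mubayi, Theorem 3, and Bollob\'as, Lemma 12.16), and uses it only to derive \cref{cor:AKS}. So there is no argument in the paper to compare yours against.

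You instead reduce the few-triangles form to the triangle-free average-degree bound (AKS/Shearer) by sampling. You sample at rate $p=\sqrt{N/(8t)}$, so the expected number of surviving triangles is $p^3t=pN/8$. You then delete one vertex per triangle, and the degree drops to $O(pd)$ with $pd=\sqrt{d^2N/(8t)}$. This is where the $\log(d^2N/t)$ comes from. What your route buys is self-containment relative to the triangle-free theorem; the paper only ever needs the statement.

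One tightening. The sentence claiming some outcome has $|H|\ge pN/2$ and $e(H)\le 4p^2Nd$ does not follow directly from your linear combination. An outcome with $|W|$ large can satisfy it while $e(G[W])>4p^2Nd$.

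What does follow immediately, with $A:=|W|-\#\mathrm{tri}$, is
\[
|H|\ge A\ge \tfrac34 pN
\quad\text{and}\quad
e(H)\le e(G[W])\le 4pd\bigl(A-\tfrac34 pN\bigr)\le 4pd\,|H|.
\]
So $H$ has average degree at most $8pd$, which is all you use afterwards, and the random-subset patch is unnecessary.

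The rest of your case analysis is sound:
\begin{itemize}
\item Tur\'an handles the case $d^2N/t\le K$.
\item Tur\'an inside $H$ handles an average degree below $e$, because $\sqrt{y/8}/\log y$ is bounded below for $y>1$.
\item The final step $\log(y/8)\ge\frac12\log y$ holds for $y\ge 64$.
\end{itemize}
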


As a corollary, for any graph $G$ with small independent sets, either $G$ has its density boosted by a log factor, or else $G$ contains many more triangles than we require. We remark that the following is exactly the $k=3$ case of \cref{thm:extralog}, up to the choice of $\eps$.

\begin{corollary}\label{cor:AKS} 
For all $\eps>0$, there exists $c>0$ such that the following holds.  If $G$ is an $N$-vertex graph with $N\geq n^{1+\eps}$ and no independent set of size $n$, then either 
\[m_2\geq \frac{c\log n}{n}N^2\quad \text{ or }\quad m_3\geq \frac{N^3}{n^{2+\eps}}.\]
\end{corollary}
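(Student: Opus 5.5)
We argue by contrapositive and apply \cref{thm:AKS} directly. Suppose $m_3 < N^3/n^{2+\eps}$; we show $m_2 \geq \frac{c\log n}{n}N^2$. Write $d = m_2/N$ for the average degree (recall $m_2$ counts labeled edges, so $m_2 = 2e(G) = dN$), and let $t = m_3/6$ be the number of triangles. By Tur\'an's theorem (the bound $F(N,n)$), since $N \geq n^{1+\eps} \geq n(n-1)$ is false in general, we only use $d \geq N/n - 1 \geq N/(2n)$, which holds because $N\geq n^{1+\eps}$ is much larger than $n$. If already $d \geq c N\log n/n$ we are done, so assume $d \leq cN\log n /n$.

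The key step is to lower bound the logarithm in \cref{thm:AKS}. We have $d^2N/t \geq (N/2n)^2 N \cdot 6 n^{2+\eps}/N^3 = \tfrac32 n^{\eps}$. We must also check the hypothesis $t \geq N$; if $t < N$ we simply replace $t$ by $N$ (the theorem with "at most $t$ triangles" is monotone in $t$), and then $d^2N/t \geq d^2 \geq N^2/(4n^2) \geq n^{2\eps}/4$. In either case $\log(d^2N/t) \geq \eps' \log n$ for some $\eps'>0$ depending on $\eps$, for $n$ large (small $n$ are absorbed by shrinking $c$).

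Then \cref{thm:AKS} gives $n > \alpha(G) \geq \frac{CN}{d}\,\eps'\log n$, i.e.\ $d \geq C\eps' N\log n/n$, so $m_2 = dN \geq C\eps' \frac{\log n}{n}N^2$. Taking $c = C\eps'$ (and also ensuring $c$ is small enough for the finitely many small $n$) yields the dichotomy. Note that the case split on $d\le cN\log n/n$ is actually unnecessary: the only lower bound on $d$ needed is the Tur\'an one.

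The main obstacle is merely bookkeeping: ensuring the argument of the logarithm is polynomially large in $n$, which uses the triangle upper bound together with the Tur\'an lower bound on $d$, and handling the side condition $t\geq N$ in \cref{thm:AKS}. Everything else is a direct substitution.
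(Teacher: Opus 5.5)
Your proof is correct and follows the paper's route: apply \cref{thm:AKS} with a triangle bound of order $N^3/n^{2+\eps}$ and solve the resulting inequality for $d$. The only differences are minor. You lower-bound the logarithm via Tur\'an's bound $d\ge N/(2n)$, whereas the paper substitutes $d=c_1N\log n/n$ and solves for $c_1$; that step tacitly needs the same Tur\'an input to keep $\log(c_1\log n)$ from being very negative. Also, the paper avoids your $t<N$ case by taking $t=N^3/n^{2+\eps}$ directly, which automatically exceeds $N$.
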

\begin{proof}
Let $t=N^3/n^{2+\eps}> N$. Suppose $m_2=c_1\frac{N^2\log n}{n}$ and $m_3\leq t$. We have $d=\frac{m_2}{N}$ and $\frac{N}{d}=\frac{1}{c_1}\frac{n}{\log n}$, so then
\[
n>\frac{C}{c_1}\frac{n}{\log n}\Big(\log\Big(\frac{d^2N}{t}\Big)\Big) \geq \frac{C}{c_1}\frac{n}{\log n}\log(c_1^2n^{\eps}\log^2n) = n\cdot \frac{C}{c_1}\Big(\eps+\frac{2\log(c_1\log n)}{\log n}\Big)
\]
Thus, $c_1$ is bounded below by some constant $c$ in terms of $C,\eps$, as desired.
\end{proof}

Intuitively, the $m_3$ condition is much more efficient for building cliques, and in the $m_2$ case, $G$ is slightly denser by a factor of $\log n$. Bootstrapping this, we obtain that in the desired regime, we either have a log improvement of \cref{lem:nologs}, or else an $n^{1-\eps}$ density boost one step ahead.

\begin{lemma}\label{lem:logboost}
For all $\eps>0$, there exists $c>0$ such that the following holds for all $k\geq 1$. Let $G$ be an $N$-vertex graph with no independent set of size $n$. If $\frac{m_k}{m_{k-1}}\geq n^{1+\eps}$, then either
\[
\frac{m_{k+1}}{m_k}\geq c\frac{\log n}{n}\,\frac{m_k}{m_{k-1}}\quad\text{ or }\quad \frac{m_{k+2}}{m_{k}}\geq \frac{1}{n^{2+\eps}}\,\Big(\frac{m_k}{m_{k-1}}\Big)^2
\]
\end{lemma}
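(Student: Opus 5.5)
Write $D = m_k/m_{k-1}$. The plan mirrors the proof of \cref{lem:nologs}, but applies \cref{cor:AKS} to each common neighborhood instead of Tur\'an's theorem. For $\vec v\in S_{k-1}$ let $x_{\vec v}=|N(\vec v)|$, so that $m_k=\sum_{\vec v}x_{\vec v}$. The other clique counts decompose as
\[
m_{k+1}=\sum_{\vec v}m_2(G[N(\vec v)]),\qquad m_{k+2}=\sum_{\vec v}m_3(G[N(\vec v)]).
\]
Each $G[N(\vec v)]$ has no independent set of size $n$. We therefore split $S_{k-1}$ according to how large $x_{\vec v}$ is and, among the large ones, according to which alternative of \cref{cor:AKS} (applied with some $\eps'<\eps$) holds.

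First we discard small neighborhoods. Let $B$ be the set of $\vec v$ with $x_{\vec v}<D/2$. These contribute at most $m_{k-1}D/2=m_k/2$ to $m_k$, so the remaining tuples $\vec v$ with $x_{\vec v}\ge D/2\ge n^{1+\eps}/2$ carry at least $m_k/2$. For $n$ large we have $D/2\ge n^{1+\eps'}$, so \cref{cor:AKS} applies to each of them. Let $T_1$ be those $\vec v$ for which $m_2(N(\vec v))\ge c'\frac{\log n}{n}x_{\vec v}^2$, and $T_2$ the rest, for which $m_3(N(\vec v))\ge x_{\vec v}^3/n^{2+\eps'}$. One of $T_1,T_2$ carries at least $m_k/4$ of the sum $\sum x_{\vec v}$.

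In case $T_1$ is heavy, Cauchy--Schwarz gives
\[
m_{k+1}\ge c'\frac{\log n}{n}\sum_{T_1}x_{\vec v}^2\ge c'\frac{\log n}{n}\frac{(m_k/4)^2}{|T_1|}\ge \frac{c'}{16}\frac{\log n}{n}\frac{m_k^2}{m_{k-1}},
\]
using $|T_1|\le m_{k-1}$. This is the first alternative. In case $T_2$ is heavy, H\"older (or power-mean) gives $\sum_{T_2}x^3\ge (m_k/4)^3/|T_2|^2$, hence
\[
m_{k+2}\ge \frac{m_k^3}{64\, n^{2+\eps'} m_{k-1}^2},\qquad\text{i.e.}\qquad \frac{m_{k+2}}{m_k}\ge \frac{1}{64\,n^{2+\eps'}}D^2.
\]
Choosing $\eps'<\eps$ (say $\eps'=\eps/2$), the factor $n^{\eps-\eps'}$ absorbs the constant $64$ for $n$ large, which gives exactly the second alternative with $n^{2+\eps}$. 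Small $n$ are handled by shrinking $c$, as the paper allows.

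The step I expect to need the most care is getting \cref{cor:AKS} to apply uniformly. Its hypothesis $N\ge n^{1+\eps}$ is only available after removing the light tuples, which is why we lose the factor $2$ and take $\eps'<\eps$. We also need its constant $c$ to depend only on $\eps'$. A second point to check is the convexity/H\"older direction: we need lower bounds on $\sum x^2$ and $\sum x^3$ given a lower bound on $\sum x$ over a set of size at most $m_{k-1}$, and both follow from the power-mean inequality. The constant $c$ comes out independent of $k$, as the statement requires.
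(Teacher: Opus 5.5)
Your proof is correct and is essentially the paper's argument: discard the tuples $\vec v\in S_{k-1}$ with $|N(\vec v)|<\frac{m_k}{2m_{k-1}}$, apply \cref{cor:AKS} to each remaining common neighborhood, and apply convexity (Cauchy--Schwarz or H\"older, with $|T_i|\le m_{k-1}$) to whichever of the two classes carries at least $m_k/4$. Your explicit choice $\eps'=\eps/2$, so that $n^{\eps-\eps'}$ absorbs the constant $64$, is exactly the step the paper leaves implicit in its phrase ``up to the choice of $c',\eps$''.
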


\begin{proof}By choosing $c$ small, we may assume $n$ is sufficiently large. Let $S=S_{k-1}$; we cover $S$ with sets $S^{\rm{small}},S^{(2)},S^{(3)}$, where for $\vec v\in S$, we put 
\[
\vec v\in\begin{cases}
S^{\rm{small}}& \text{if }|N(\vec v)|<\frac{m_k}{2m_{k-1}}\\
S^{(2)}&\text{if }m_2(N(\vec v))\geq \frac{c\log n}
{n}|N(\vec v)|^2\\
S^{(3)}&\text{if }m_3(N(\vec v))\geq \frac{1}{n^{2+\eps}}|N(\vec v)|^3\\
\end{cases}\]
Since $\frac{m_k}{2m_{k-1}}\geq n^{1+\eps/2}$, \cref{cor:AKS} implies that for some choice of $c$, if $\vec v\notin S^{\rm{small}}$ then $\vec v\in S^{(2)}\cup S^{(3)}$, so these three sets cover $S$. Define
\[
m_k^{\rm{small}}=\sum_{\vec v\in S^{\rm{small}}}|N(\vec v)|\leq \sum_{\vec v\in S^{\rm{small}}}\frac{m_k}{2m_{k-1}}\leq m_k/2.
\]
Define $m_k^{(2)},m_k^{(3)}$ similarly, so $m_k^{\rm{small}}+m_k^{(2)}+m_k^{(3)}\geq m_k$. Suppose that $m_k^{(2)}\geq m_k/4$. Then by convexity,
\[
m_{k+1}\geq \sum_{\vec v\in S^{(2)}}m_2(N(\vec v)) \geq |S^{(2)}| \cdot \frac{c\log n}{n}\Big(\frac{m_k/4}{|S^{(2)}|}\Big)^2 \geq c'\frac{\log n}{n} \frac{m_k^2}{m_{k-1}}.
\]
Otherwise, we have $m_k^{(3)}\geq m_k/4$ so
\[
m_{k+2}\geq \sum_{\vec v\in S^{(3)}}m_3(N(\vec v)) \geq |S^{(3)}| \cdot \frac{1}{n^{2+\eps}}\Big(\frac{m_k/4}{|S^{(3)}|}\Big)^3 \geq c'\frac{1}{n^{2+\eps}} \frac{m_k^3}{m_{k-1}^2},
\]
as desired, up to the choice of $c',\eps$.
\end{proof}

We now prove our main theorems by a calculation. The main idea is to show that at each step, we gain a log factor. If not, then we gain many larger cliques more efficiently. Since this second alternative is more powerful, the worst case calculation looks analogous to Equation \eqref{eq:nologfinal}, roughly  $\prod_iN\big(\frac{\log n}{n}\big)^i=N^k(\frac{\log n}{n})^{\binom k2}$. In the last step, we cannot help but lose a log factor. We prove a slightly technical inductive statement which implies both Theorems \ref{thm:main} and \ref{thm:extralog}.
\begin{theorem}\label{thm:maininductive}
For all $\eps>0$ and $k\geq 1$ there exists $c>0$ such that the following holds for all sufficiently large $n$. Let $G$ be an $N$-vertex graph with no independent set of size $n$ and $N\geq n^{k-2+\eps}$. Then there is some critical $1\leq j\leq k-1$ such that for all $0\leq i< j$, we have 
\begin{align*}
\frac{m_{i+1}}{m_i}&\geq N\Big(c\frac{\log n}{n}\Big)^i.\tag{$A_i$}\label{eq:Ai}\\
\shortintertext{If {$j\neq k-1$} then we have}
\frac{m_{j+2}}{m_{j}}&\geq N^2/n^{2j+\eps}\tag{$B_j$}\label{eq:Bi},\\
\shortintertext{and for all {$j\leq i\leq k-2$} we have}
\frac{m_{i+2}}{m_{i+1}}&\geq \alpha_i N/n^{i+\eps},
\tag{$C_i$}\label{eq:Ci}
\end{align*}
where $\alpha_i=2$ for all $i<k-2$ and $\alpha_{k-2}=1$.
\end{theorem}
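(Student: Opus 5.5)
I plan to fix the critical index $j$ greedily: take $j$ to be the first index at which the log-boost alternative of \cref{lem:logboost} fails, capped at $k-1$. Let $c_0=c(\eps')$ be the constant from \cref{lem:logboost} for a slightly smaller $\eps'$, and choose $c\le c_0$. The base case $(A_0)$ is $m_1/m_0=N$, which holds trivially.

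Suppose $(A_0),\dots,(A_{i-1})$ hold for some $1\le i\le k-2$. Multiplying them gives $m_i/m_{i-1}\ge N(c\log n/n)^{i-1}\ge n^{k-1-i+\eps}(c\log n)^{i-1}$. For $i\le k-2$ this is at least $n^{1+\eps}$, so \cref{lem:logboost} applies at level $i$. Either $m_{i+1}/m_i\ge c_0\frac{\log n}{n}\cdot\frac{m_i}{m_{i-1}}\ge N(c\log n/n)^i$, giving $(A_i)$ as long as $c\le c_0$ absorbs the accumulated constants, or else
\[
\frac{m_{i+2}}{m_i}\ge \frac{1}{n^{2+\eps'}}\Big(\frac{m_i}{m_{i-1}}\Big)^2\ge \frac{N^2(c\log n)^{2i-2}}{n^{2i+\eps'}}\ge \frac{N^2}{n^{2i+\eps}}
\]
for large $n$, since the $\eps-\eps'$ slack beats the $\log$ powers. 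In the second case I set $j=i$, which gives $(B_j)$. If $(A_i)$ holds for every $i\le k-2$, I set $j=k-1$; then $(B_j)$ is not required and the only remaining condition is $(C_{k-2})$.

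It remains to prove $(C_i)$ for $j\le i\le k-2$, with $\alpha_i=2$ for $i<k-2$. I would do this by induction on $i$ using \cref{cor:nologs}. The idea is to convert $(B_j)$ into ratio information. Since $m_{j+2}/m_j=(m_{j+2}/m_{j+1})(m_{j+1}/m_j)$, I need a one-step bound. \cref{cor:nologs} applied at level $j$ gives a lower bound on $m_{j+1}/m_j$ from $m_j/m_{j-1}$, but that is the wrong direction for isolating $m_{j+2}/m_{j+1}$.

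Instead I would use an upper bound $m_{j+1}/m_j\le N$, which holds trivially since each $j$-clique has at most $N$ extensions. This gives
\[
\frac{m_{j+2}}{m_{j+1}}\ge \frac{N}{n^{2j+\eps}}.
\]
That is off by a factor $n^{j}$ from $(C_j)$, so the trivial bound is too weak. The better route is to apply \cref{lem:nologs} at level $j+1$ in the form $m_{j+2}/m_{j+1}\ge (1-o(1))\frac1n\cdot m_{j+1}/m_j$, which needs $m_{j+1}/m_j\ge n^{1+\eps}$, and to balance this against $(B_j)$. If $m_{j+1}/m_j\ge N/n^{j}$, then \cref{cor:nologs} yields $m_{j+2}/m_{j+1}\ge (1-o(1))N/n^{j+1}$, which is within the required $n^{\eps}$ slack. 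Otherwise $m_{j+1}/m_j< N/n^j$, and then $(B_j)$ gives $m_{j+2}/m_{j+1}\ge N/n^{j+\eps}$ directly. In both cases, after adjusting $\eps$ to absorb constants (using $n^{\eps}$ slack to get the factor $\alpha=2$), I obtain $(C_j)$ with room to spare.

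Next I propagate the bound by applying \cref{cor:nologs} repeatedly. From $(C_{i-1})$ we have $m_{i+1}/m_i\ge 2N/n^{i-1+\eps}\ge 2n^{k-1-i}$, and this is at least $Cn$ when $i\le k-3$ (using $N\ge n^{k-2+\eps}$), with a large constant available from $n^{\eps}$. Then $m_{i+2}/m_{i+1}\ge \frac1n(1-\frac1C)\cdot 2N/n^{i-1+\eps}$. At the final step $i=k-2$ the hypothesis $m_{k-1}/m_{k-2}\ge 2N/n^{k-3+\eps}$ only gives ratio $\gtrsim n^{1}\cdot 2$, i.e. $C=2$. So \cref{cor:nologs} loses a factor $1/2$, and this is exactly why $\alpha_{k-2}=1$ rather than $2$. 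In the case $j=k-1$, $(C_{k-2})$ instead follows from $(A_{k-2})$ by one application of \cref{lem:nologs}, since $m_{k-1}/m_{k-2}\ge N(c\log n/n)^{k-2}\ge n^{\eps}(c\log n)^{k-2}\gg n$.

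The main obstacle is the first $(C_j)$ step, namely extracting a single-step ratio from the two-step bound $(B_j)$. The case split on $m_{j+1}/m_j$ described above is what I would try first. I also need to check that the required hypothesis $m_{j+1}/m_j\gtrsim n$ holds in the first branch, which it does since there $m_{j+1}/m_j\ge N/n^j\ge n^{k-2-j+\eps}$ with $j\le k-2$. The delicate part is bookkeeping the $\eps$ losses so that the constant $\alpha_i=2$ survives.
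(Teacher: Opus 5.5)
Your selection of $j$ and your derivations of $(A_i)$ and $(B_j)$ are correct. However, the step you single out as the main obstacle, $(C_j)$, has a genuine gap. In your first branch, $m_{j+1}/m_j\geq N/n^{j}$ together with \cref{cor:nologs} gives only $m_{j+2}/m_{j+1}\gtrsim N/n^{j+1}$, while $(C_j)$ asks for $2N/n^{j+\eps}$. For $\eps<1$ your bound is too small by a factor of order $n^{1-\eps}$: the target $N/n^{j+\eps}$ is larger than $N/n^{j+1}$, not smaller, so there is no $n^{\eps}$ slack to spend. Moreover, for $j=k-2$ the hypothesis $m_{j+1}/m_j\gtrsim n$ of \cref{cor:nologs} is not even available, since $N/n^{k-2}$ may be only $n^{\eps}$.

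No other threshold $T$ rescues the split. The first branch needs $T\gtrsim N/n^{j-1+\eps}$. The second branch, even with your strong form $m_{j+2}/m_j\geq N^2(c\log n)^{2j-2}/n^{2j+\eps'}$, needs $T\leq N(c\log n)^{2j-2}n^{\eps-\eps'}/(2n^{j})$. These are incompatible for $\eps\leq 1/2$ and large $n$.

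The missing idea is that the choice of $j$ already supplies the upper bound on $m_{j+1}/m_j$ you were looking for, so no case split is needed. With your definition, $j$ is where the first alternative of \cref{lem:logboost} fails, i.e.\ $m_{j+1}/m_j<c_0\frac{\log n}{n}\frac{m_j}{m_{j-1}}$. Dividing the second alternative by this and then using $(A_{j-1})$ gives
\[
\frac{m_{j+2}}{m_{j+1}}> \frac{1}{c_0n^{1+\eps'}\log n}\cdot\frac{m_j}{m_{j-1}}\geq \frac{c^{j-1}}{c_0}\cdot\frac{N(\log n)^{j-2}}{n^{j+\eps'}}\geq \frac{2N}{n^{j+\eps}}
\]
for large $n$, which is $(C_j)$. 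The paper does the same with $j$ taken as the least index where $(A_j)$ fails. Then $m_{j+1}/m_j<N(c\log n/n)^{j}$, which (as $c\leq c_0$) forces the second alternative, and dividing the strong form of $(B_j)$ by this bound yields $(C_j)$. Note that this bound exceeds your threshold $N/n^j$ by $(c\log n)^j$. That loss is absorbed only because the strong form of $(B_j)$ carries the extra factor $(c\log n)^{2j-2}n^{\eps-\eps'}$; the weak form $N^2/n^{2j+\eps}$ would fall short.

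Your propagation to $(C_i)$ for $j<i\leq k-2$ matches the paper. For $i<k-2$ you have $m_{i+1}/m_i\geq 2n^2$, so use the $n^2$ form of \cref{cor:nologs} to keep $\alpha_i=2$ exactly rather than $2(1-\frac1C)$.

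You should drop your treatment of $j=k-1$. The range $j\leq i\leq k-2$ is then empty, so $(C_{k-2})$ is not claimed. Your derivation of it is also wrong: $n^{\eps}(c\log n)^{k-2}$ is not $\gg n$ for $\eps<1$. Even a ratio $m_{k-1}/m_{k-2}\geq 2n$ would only give $m_k/m_{k-1}\gtrsim N(c\log n)^{k-2}/n^{k-1}$, short of $N/n^{k-2+\eps}$ by roughly $n^{1-\eps}$. In fact $(C_{k-2})$ cannot hold in general when $j=k-1$: it would give $m_k\gg g(N,k,n)$ in every case, contradicting Theorem~\ref{thm:k=4tight} for $k=4$. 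This is exactly where the $\log n$ loss in \cref{thm:main} comes from.
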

\begin{proof}Clearly $(A_0)$ holds with $m_1/m_0=N$. If $(A_i)$ holds for all $i\leq k-2$ then we set $j=k-1$ and we are done; otherwise let $j\in [1,k-2]$ be the least index where $(A_j)$ fails. Then by $(A_{j-1})$, 
\[
\frac{m_{j}}{m_{j-1}}\geq N\Big(c\frac{\log n}{n}\Big)^{j-1}\geq n^{k-2+\eps} \Big(c\frac{\log n}{n}\Big)^{k-3} 
>n^{1+\eps},
\]
so by applying \cref{lem:logboost} with $\eps/2$, we have either $(A_j)$ or else 
\[
\frac{m_{j+2}}{m_j}\geq N^2\frac{1}{n^{2+\eps/2}}\Big(c\frac{\log n}{n}\Big)^{2j-2} \gg N^2/n^{2j+\eps},
\]
giving $(B_j)$. Assuming $(A_j)$ fails, we leverage the upper bound on $m_{j+1}/m_j$ to obtain
\[
\frac{m_{j+2}}{m_{j+1}} = \frac{m_{j+2}}{m_j}\frac{m_j}{m_{j+1}}\geq \frac{N^2}{n^{2+\eps/2}}\ \frac{1}{N}\Big(c\frac{\log n}{n}\Big)^{2j-2-j}\gg 2\frac{N}{n^{j+\eps}},
\]
giving $(C_j)$. Now, for induction suppose that $(C_{i-1})$ holds for some $i\in [j+1,k-2]$, so $\frac{m_{i+1}}{m_i}\geq 2N/n^{i-1+\eps}\geq 2n^{k-1-i}$, and we split into cases. If $i<k-2$ then $\frac{m_{i+1}}{m_i}\geq 2n^2$ so by \cref{cor:nologs} we have $\frac{m_{i+2}}{m_{i+1}}\geq 2N/n^{i+\eps}$, giving $(C_i)$. If $i=k-2$ then $\frac{m_{i+1}}{m_i}\geq 2n$ so by \cref{cor:nologs} we have $\frac{m_{i+2}}{m_{i+1}}\geq \frac 12\cdot 2N/n^{i+\eps}$, also giving $(C_i)$. This completes the proof by induction.
\end{proof}

From this, we immediately conclude our two main theorems by simple calculations.
\begin{proof}[Proof of \cref{thm:extralog}]
Let $G$ be an $N$-vertex graph with no independent set of size $n$ and $N\geq n^{k-2+\eps}$, and assume $n$ is sufficiently large. Apply \cref{thm:maininductive} with constant $\eps/2$ to obtain some $j\leq k-1$. If $j<k-1$ then by combining $(A_i),(B_i), (C_i)$ we have
\begin{align*}
m_k&\geq \prod_{i=0}^{j-1}N\Big(c\frac{\log n}{n}\Big)^i\cdot N^2\frac{1}{n^{2j+\eps/2}} \cdot \prod_{i=j+1}^{k-2}N \frac{1}{n^{i+\eps/2}}\\
&\gg N^k n^{-\binom k2} \cdot n^{(k-1-j)(1-\eps/2)}\\
&\gg n^{1-\eps}g(N,k,n).
\end{align*}
Otherwise $j=k-1$, so then for all $\ell\leq k-1$ we have
\[
m_\ell\geq \prod_{i=0}^{\ell -1}N\Big(c\frac{\log n}{n}\Big)^i =\Omega(g(N,\ell,n)).\qedhere
\]
\end{proof}

The other theorem is similar.

\begin{proof}[Proof of \cref{thm:main}]
Let $G$ be an $N$-vertex graph with no independent set of size $n$ and $N\geq c_1n^{k-1}/\log^{k-2}n$, with $c_1$ and $n$ sufficiently large. Apply \cref{thm:maininductive} with constant $\eps=1/2$ to obtain some $j\leq k-1$. If $j<k-1$ then similarly to above, we have $m_k\gg g(N,k,n)$, implying the result. Otherwise $j=k-1$ so similarly we have $m_{k-1}=\Omega(g(N,k-1,n))$. In particular, $(A_{k-2})$ holds so $\frac{m_{k-1}}{m_{k-2}}\geq N\big(c\frac{\log n}{n}\big)^{k-2}>2n$. Then by \cref{cor:nologs}, we have 
\[
\frac{m_{k}}{m_{k-1}}\geq \frac{1}{2n} \frac{m_{k-1}}{m_{k-2}}\geq N c'\frac{\log^{k-2}n}{n^{k-1}}.
\]
We conclude with the desired bound,
\[
m_k= \frac{m_{k}}{m_{k-1}} m_{k-1} \geq \Omega\bigg(N^k \Big(\frac{\log n}{n}\Big)^{\binom{k}{2}}/\log n\bigg).\qedhere
\]
\end{proof}
Observe that by this proof, for any extremal construction $G$ matching this bound, a typical neighborhood of an $i$-clique for $i \leq k-2$ looks random-like, whereas a typical neighborhood of a $(k-1)$-clique is Tur\'an-like, having lower density but more structure (e.g., complete multipartite). In \cref{lem:blowups}, we use blowups to achieve this behavior.

\subsection{Upper bounds}\label{sec:upperbounds}

First, we show that once we have one (slightly stronger) upper bound witness, we can use blow-ups to obtain upper bounds for all $N$.

\begin{lemma}\label{lem:blowups}Suppose for some $x>0$, $R\geq 1$, there exists an $R$-vertex graph $G$ with no independent set of size $n$ and at most $R^kx$ cliques of size \emph{at most} $k$. Then for all $N\geq 1$, letting $C=k^k$, we have
\[
f(N,k,n)\leq C\, N^k x.
\]
\end{lemma}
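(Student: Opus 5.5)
The plan is to build, for every $N$, an $N$-vertex graph with no independent set of size $n$ out of the witness $G$, using two operations that both preserve $\alpha<n$. For large $N$ we blow up each vertex of $G$ into a clique. For small $N$, or to trim to exactly $N$ vertices, we pass to a random induced subgraph. Throughout, $m'_s$ denotes the number of unlabeled $s$-cliques of $G$. The hypothesis says $\sum_{s\le k} m'_s\le R^kx$, and in particular $R=m'_1\le R^kx$.

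\emph{Case $N\ge R$.} Let $t=\lceil N/R\rceil$, so that $t\le 2N/R$. Form the lexicographic product $G[K_t]$: each vertex $v$ is replaced by a clique $V_v$ of size $t$, and $V_u$ is completely joined to $V_v$ exactly when $uv\in E(G)$.
\begin{itemize}
\item \emph{Independence number.} An independent set meets each $V_v$ in at most one vertex, and the parts it meets form an independent set of $G$. Hence $\alpha(G[K_t])=\alpha(G)<n$.
\item \emph{Counting $k$-cliques.} Every $k$-clique $Q$ of the blow-up projects onto a set $P$ of parts, and $P$ is a clique of $G$ of some size $s\le k$. For a fixed $s$-clique $P$, the $k$-cliques projecting into $P$ are $k$-subsets of a set of $st\le kt$ vertices. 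So there are at most $\binom{kt}{k}\le (kt)^k/k!$ of them.
\item \emph{Total.} Summing over $s\le k$, the number of $k$-cliques is at most
\[
\frac{(kt)^k}{k!}\sum_{s\le k}m'_s\le \frac{k^kt^k}{k!}R^kx\le \frac{k^k2^k}{k!}N^kx\le k^kN^kx .
\]
The last step uses $2^k\le k!$ for $k\ge4$; for small $k$ one absorbs the constant.
\end{itemize}
The blow-up has $tR\ge N$ vertices. Deleting vertices creates no new cliques and cannot enlarge independent sets, so any $N$-vertex induced subgraph witnesses $f(N,k,n)\le CN^kx$.

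\emph{Case $N<R$.} Take a uniformly random $N$-subset $U\subseteq V(G)$. The induced graph $G[U]$ still has no independent set of size $n$. A fixed $k$-clique survives with probability $\binom{R-k}{N-k}/\binom{R}{N}\le (N/R)^k$. So the expected number of $k$-cliques is at most $m'_k(N/R)^k\le R^kx\,(N/R)^k=N^kx$, and some choice of $U$ achieves this.

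The main point needing care is the counting step in the blow-up. There, the cliques of $G$ of every size $s\le k$ contribute, not just $k$-cliques, which is exactly why the hypothesis counts cliques of size \emph{at most} $k$. The other issue is tracking the rounding of $t$ so that the final constant is at most $k^k$; I expect this to work out with the slack in $k^k$ versus $2^kk^k/k!$.
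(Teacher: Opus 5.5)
Your construction matches the paper's: blow each vertex of $G$ up into a clique to get at least $N$ vertices, then pass to an induced subgraph. Your independence-number argument and your clique-projection count are both correct.

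The step that fails is the constant in the case $N\ge R$. There you trim the $tR$-vertex blow-up to an \emph{arbitrary} $N$-subset. The rounding $t=\lceil N/R\rceil\le 2N/R$ then costs a factor $2^k$, and you end with $\frac{2^kk^k}{k!}N^kx$. This is at most $k^kN^kx$ only when $2^k\le k!$, i.e.\ $k\ge 4$. For $k\le 3$ it is strictly larger; for instance, you get $8N^2x$ and $36N^3x$ instead of $4N^2x$ and $27N^3x$. So the slack you expected is not there. ``Absorbing the constant'' proves a weaker statement than the lemma, which fixes $C=k^k$.

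The repair uses a tool you already have: trim the blow-up to a \emph{uniformly random} $N$-subset. Each $k$-set survives with probability at most $(N/(tR))^k$, which cancels the factor $t^kR^k$ exactly. The expected number of $k$-cliques is then at most $\frac{(kt)^k}{k!}R^kx\cdot\frac{N^k}{t^kR^k}=\frac{k^k}{k!}N^kx$. There is no rounding loss and no need to split into cases, since your case $N<R$ is just $t=1$. This is what the paper does: it blows up to a multiple of $R$ that is at least $N$, then subsamples at random.
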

\begin{proof}First, suppose $N$ is a positive multiple of $R$. Obtain $G_+$ from $G$ by replacing each vertex with a clique of size $N/R$. Each $k$-clique in $G_+$ is contained in a blow-up of an $i$-clique of $G$ for some $i\leq k$. Counting labeled cliques, we have
\[
m_k(G_+)\leq \sum_{i\leq k}m_i(G)(iN/R)^k \leq k^k\, k!\, N^k x.
\]
Divide by $k!$ to obtain the result for $N$. Now, suppose $N'<N$, and we prove the result for $N'$. Let $G'\subseteq G_+$ be an induced subgraph on a random subset of $N'$ vertices. Then $G'$ contains no independent set of size $n$ and $\Ex[m_k(G')]\leq (N'/N)^k m_k(G^+)$, so there is some choice of $G'$ giving the desired bound.
\end{proof}

Taking $G\sim \bG(N,p)$ with $N=n^k$ and $p=2k(\log n)/n$, we have with high probability that $\alpha(G)<n$  and $m_{\leq k}(G) \sim N^kp^{-\binom k2}$, so we conclude our general upper bound for $f$. This shows the bound of \cref{thm:main} is tight up to a factor of $\log n$.

\begin{corollary}\label{cor:randomupperbound} For all $N,n,k\geq 2$, we have $f(N,k,n)\leq O_k(g(N,k,n))$.
\end{corollary}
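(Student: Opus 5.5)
The plan is to apply \cref{lem:blowups} with a single witness graph drawn from $\bG(R,p)$, with $R=n^k$ and $p=2k\log n/n$. This reduces \cref{cor:randomupperbound} to two facts about the random graph. First, with positive probability it has no independent set of size $n$. Second, its total number of labeled cliques of size at most $k$ is $O_k(R^k p^{\binom k2})$. Once we have this, \cref{lem:blowups} with $x=O_k(p^{\binom k2})$ gives, for every $N\ge 1$,
\[
f(N,k,n)\leq k^k N^k\, O_k\Big(\Big(\frac{2k\log n}{n}\Big)^{\binom k2}\Big)=O_k\Big(\binom Nk\Big(\frac{\log n}{n}\Big)^{\binom k2}\Big)=O_k(g(N,k,n)),
\]
using $N^k\le k^k\binom Nk$ for $N\ge k$. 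For $N<k$ we have $f=0$ trivially, since $f(N,k,n)=0$ whenever $N<r(k,n)$.

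For the independence number, a union bound gives
\[
\Pr[\alpha\ge n]\le \binom Rn(1-p)^{\binom n2}\le \exp\Big(n\log R-p\binom n2\Big)=\exp\big(kn\log n-k(n-1)\log n\big)=n^{k}\cdot\exp(\ldots),
\]
and this needs care. With $p=2k\log n/n$ the exponent is $kn\log n - k(n-1)\log n = k\log n$, which is the wrong sign. So I would enlarge the constant, taking $p=4k\log n/n$. Then the exponent is about $kn\log n-2kn\log n\to-\infty$, and the probability tends to $0$. Changing the constant in $p$ only affects the $O_k$.

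For the clique counts, compute the expectation: $\mathbb E[m_i]\le R^i p^{\binom i2}$. We need this to be at most $O_k(R^k p^{\binom k2})$ for each $i\le k$. This is the step I expect to be the only real check, namely that cliques of size smaller than $k$ do not dominate. The ratio is $\mathbb E[m_{i+1}]/\mathbb E[m_i]\approx Rp^{i}$, and for $i\le k-1$ we have $Rp^i\ge n^k(\log n/n)^{k-1}\ge n\ge1$. So the counts are increasing in $i$, the sum over $i\le k$ is at most $(k+1)R^kp^{\binom k2}$, and in particular all expected counts are positive.

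By Markov's inequality, with probability at least $1/2$ the clique count is at most twice its expectation. Intersecting this with the high-probability event $\alpha<n$, some outcome satisfies both. That outcome is the witness for \cref{lem:blowups}, which handles all $N$ (including $N<R$, via random subsets), and this finishes the proof. The case $n$ small (bounded) can be absorbed into the constant, or handled trivially, since then $g$ is of order $\binom Nk$ and $f\le\binom Nk$.
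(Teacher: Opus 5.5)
Your proposal is correct and follows the paper's own proof. Both take a $\bG(n^k,p)$ witness with $p=\Theta_k(\log n/n)$, use a union bound to get $\alpha<n$, check that the expected number of cliques of size at most $k$ is $O_k(R^kp^{\binom k2})$, and then apply \cref{lem:blowups}. Your enlargement of $p$ to $4k\log n/n$ is harmless but unnecessary, since the sharper estimate $\binom Rn\le (eR/n)^n$ already makes the union bound work with the paper's choice $p=2k\log n/n$.
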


Now, we prove that \cref{thm:main} is conditionally tight for all $k$, conditioned on the asymptotics of $r(k,n)$ being tight to the upper bound of \eqref{eq:ramseybounds}. In order to apply \cref{lem:blowups} to prove \cref{thm:tightness}, it suffices to show that a particular Ramsey graph on $R$ vertices would have $O(g(R,k,n)/\log n)$ cliques of size at most $k$. Towards this, define $m_{\leq k}=\sum_{i\leq k}m_i$.

\begin{proof}[Proof of \cref{thm:tightness}] Suppose that $R=r(k,n)-1=\Theta(n^{k-1}/\log^{k-2}n)$. Let $G$ be an $R$-vertex graph with no independent set of size $n$ and no clique of size $k$. For $i\leq k-1$, we count $i$-cliques by choosing vertices $v_1,\ldots, v_i$ sequentially. For $j\leq i$, $N(v_1,\ldots, v_j)$ is $K_{k-j}$-free so we must have $|N(v_1,\ldots, v_{j})|<r(k-j,n)$. This gives
\begin{align*}
m_i(G)< \prod_{j=0}^{i-1}r(k-j,n) \leq O\bigg(\prod_{j=0}^{i-1}n^{k-j-1}/\log^{k-j-2}n\bigg)&= O\big(R^i(\log n/n)^{\binom i2}\big).
\end{align*}
Notably, this is necessarily tight for all $i$ by \cref{thm:extralog}. This gives the bound $m_{\leq k}(G)\leq O(R^{k-1}(\log n/n)^{\binom{k-1}2})=O(g(R,k,n)/\log n)$ by choice of $R$. Then by \cref{lem:blowups} we conclude $f(N,k,n)\leq O(g(N,k,n)/\log n)$ for all $N$, as desired.
\end{proof}

We now show that \cref{thm:main} is tight over its domain when $k=4$. The proof ideas for the following theorem were suggested by ChatGPT 5.6 Sol after reviewing an early draft of the paper. The proof is written by the author.

\begin{theorem}\label{thm:k=4tight}There exist $C_0,C_1>0$ such that for all $n\ge 2$ and
$N\ge C_0n^3/(\log n)^2$, there is an
$N$-vertex graph $G$ with $\alpha(G)<n$ and at most
\[
C_1\binom{N}{4}
\left(\frac{\log n}{n}\right)^6/\log n
\]
cliques of size $4$. Together with \cref{thm:main}, we have
\[
f(N,4,n)=\Theta\left(g(N,4,n)/\log n\right)
\]
throughout this range.
\end{theorem}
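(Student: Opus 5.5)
The plan is to apply \cref{lem:blowups} with $k=4$. It suffices to build, for one suitable $R=\Theta(n^3/\log^2 n)$, an $R$-vertex graph $G_0$ with $\alpha(G_0)<n$ and
\[
m_{\le 4}(G_0)\le O\Big(R^4\big(\tfrac{\log n}{n}\big)^6/\log n\Big).
\]
The lemma then gives $f(N,4,n)=O(g(N,4,n)/\log n)$ for all $N$, and \cref{thm:main} supplies the matching lower bound once $N\ge C_0n^3/\log^2 n$. Unconditional control of $r(4,n)$ is not available, since the lower bound in \eqref{eq:ramseybounds} has $\log^4 n$. So, unlike in \cref{thm:tightness}, we cannot use a $K_4$-free Ramsey graph. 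We must construct a graph that has $4$-cliques but few of them.

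The heuristic guiding the construction comes from the proof of \cref{thm:main}. In an extremal graph, neighbourhoods of $i$-cliques with $i\le 2$ should be random-like with density about $\log n/n$. Neighbourhoods of triangles should instead be Tur\'an-like. A quick computation shows why plain $\bG(R,p)$ fails: with $p=\Theta(\log n/n)$, a triangle has about $Rp^3=\Theta(\log n)$ common neighbours. Those neighbourhoods are random of density $p$, so $m_4/m_3\approx Rp^3$, which is a $\log n$ factor too large. We therefore want a graph in which pairs look random but the common neighbourhood of a triangle carries only $O(1)$ extensions on average.

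The first candidate I would try is a random lift of a triangle-free Ramsey graph, in the spirit of the Bohman--Mubayi construction. Let $H$ be a triangle-free graph on $M=\Theta(s^2/\log s)$ vertices with $\alpha(H)<s$ (Kim), and let the vertex set be $V(H)\times[T]$ with $MT=R$. Put edges inside each fibre independently with probability $p$, and between the fibres over each edge of $H$ independently with probability $q$. Because $H$ is triangle-free, every clique of $G_0$ meets at most two fibres, and those fibres lie over a common edge of $H$. The clique counts are therefore explicit sums: $MT^4p^6$ for cliques inside a fibre, plus $e(H)T^4(p^3q^3+p^2q^4)$ for cliques split across an edge, together with lower-order terms for $m_i$, $i<4$. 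The steps, in order, are: (i) choose $s,T,p,q$ with $MT=R$ so that all these terms are $O(R^4(\log n/n)^6/\log n)$; (ii) prove $\alpha(G_0)<n$ with positive probability by a first-moment bound over sets $I$ of size $n$. For (ii), project $I$ onto $H$ and use that the fibres $I$ occupies cannot form a large independent set in $H$. This forces many pairs of $I$ into the same fibre or across an $H$-edge, where they are edges with probability $p$ or $q$, so the union bound can be carried out. (iii) Take a good outcome and apply \cref{lem:blowups}.

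The main obstacle is step (ii) combined with the parameter balance in step (i). I checked that the deterministic version, a lexicographic product of $H$ with a fixed graph, cannot satisfy both constraints: the cross-fibre $2+2$ cliques $e(H)T^4p^2q^4$ are then too numerous. So the randomness of $q$ is essential, and the union bound must exploit the structure of $H$ rather than only $\alpha(H)$. If this lift cannot be tuned, my fallback is an alteration argument. Start from $\bG(R,p)$ and, for each triangle, delete random edges inside its common neighbourhood so that it becomes nearly complete multipartite with parts of size about $\log n$. Then show via a Janson/Kim--Vu type estimate that independent sets of size $n$ are still destroyed.
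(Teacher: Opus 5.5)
\textbf{There is a genuine gap.} Your reduction to a single witness via \cref{lem:blowups}, with the matching lower bound from \cref{thm:main}, is exactly the paper's framework; the gap is the witness itself. The random lift of a triangle-free Ramsey graph cannot be tuned for any choice of $s,T,p,q$. The obstruction has nothing to do with the union bound or with the randomness of $q$. Write $F_v$ for the fibre over $v$ and $\bar d$ for the average degree of $H$. Let $J$ be a maximum independent set of $H$, so $|J|=\Omega(s)$, since a triangle-free graph on $M\asymp s^2/\log s$ vertices has independence number $\Omega(\sqrt{M\log M})$. No edges of $G_0$ join two fibres over $J$, so $\alpha(G_0)\ge\sum_{v\in J}\alpha(F_v)$.

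If $p\le 1/2$, a typical fibre $\bG(T,p)$ has independence number of order $\log(Tp)/p$, with $\log(Tp)\asymp\log n$. So $\alpha(G_0)<n$ forces $p\gtrsim s\log n/n$. The $4$-cliques inside fibres alone then number about $MT^4p^6=R^4p^6/M^3\gtrsim R^4(\log n/n)^6\log^3 s$. This exceeds your target $R^4(\log n/n)^6/\log n$ by a factor of order $\log n\cdot\log^3 s$.

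If $p>1/2$, the within-fibre count $\asymp R^4/M^3$ forces $M\gtrsim n^2/\log^{5/3}n$. One vertex from each fibre induces the random subgraph $H_q$ of $H$. Tur\'an's bound $\alpha(H_q)\gtrsim M/(1+q\bar d)$ then forces $q\bar d\gtrsim M/n$. The $3+1$ cliques over edges of $H$ number about $M\bar d\,T^4p^3q^3\gtrsim R^4\bar d q^3/M^3\gtrsim R^4/(n^3\bar d^2)$. This is within the target only if $\bar d\gtrsim n^{3/2}/\log^{5/2}n$. But $\bar d\le\Delta(H)\le\alpha(H)<n$, because neighbourhoods in $H$ are independent and $\alpha(H)\le\alpha(G_0)$.

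The last computation pinpoints the missing idea. A blow-up witness with clique-like parts needs a skeleton of average degree roughly $n^{3/2}$, which no triangle-free graph with independence number below $n$ can have. The paper gets this by trading triangle-freeness for $K_4$-freeness. It takes the Mattheus--Verstraete graph $H$ of \cref{thm:k4freebase}: $K_4$-free, with $\Theta(n^2)$ vertices, $\Theta(n^{3.5})$ edges, and at least $10n^{3/2}$ edges inside every $n$-set. It replaces each vertex by a clique of size $\lambda=n/\log^2 n$ and keeps each cross edge with probability $p=\log n/\sqrt n=\lambda^{-1/2}$. An independent $n$-set must meet $n$ distinct parts, so it survives with probability at most $e^{-10pn^{3/2}}=n^{-10n}$. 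This beats the at most $\binom Rn\le n^{2n}$ candidates. Every clique of $G$ lies over a clique of $H$ on at most three vertices, and $m_3(H)=O(n^{4.5})$. The expected count is therefore $O(n^2p^{-8}+n^{3.5}p^{-5}+n^{4.5}p^{-3})=O(n^6/\log^3 n)$, and \cref{lem:blowups} finishes exactly as you planned.

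Your fallback alteration is not developed into an argument. Moreover, the $K_4$'s through a triangle $\tau$ correspond to vertices of its common neighbourhood, not to edges inside it. Thinning edges there does not attack the excess $m_4/m_3\asymp Rp^3$ that you identified.
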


Our graph is built from a $K_4$-free construction  of Mattheus and Verstraete. We blow up each vertex into a clique and randomly sparsify the joins between adjacent parts. 

Interestingly, their lower bound on $r(4,n)$ leaves a $\log^2n$ gap from the upper bound, yet their intermediate construction gives an optimal bound for $f$ up to constants. The following is implied by their construction which they denote $H_q^*$ for a prime power $q=\Theta(\sqrt n)$; see Theorem 3 and Proposition 2 in \cite{r4k}.

\begin{theorem}[Mattheus--Verstraete]\label{thm:k4freebase} There exists $n_0\geq 1$ such that for $n\geq n_0$, there is a $K_4$-free graph $H$ with $\Theta(n^2)$ vertices and $\Theta(n^{3.5})$ edges such that every subset of $n$ vertices in $H$ contains at least $10n^{3/2}$ edges.
\end{theorem}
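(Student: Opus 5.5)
The statement is quoted from Mattheus and Verstra\"ete, so I would not build a new construction. Instead I would read it off from their graph $H_q^*$ (Theorem 3 and Proposition 2 of \cite{r4k}), with the only real work being the choice of $q$ in terms of $n$ and the tracking of constants. Recall the shape of their construction. Start from the Hermitian unital over $\mathbb F_{q^2}$, which has $q^3+1$ points and $q^2(q^2-q+1)$ lines, each line on $q+1$ points. Build a graph on the $\Theta(q^4)$ lines in which each point induces a random complete bipartite graph on the $q^2$ lines through it. Then delete edges so as to destroy every $K_4$; since every $K_4$ comes from an O'Nan configuration, this deletion is cheap. Their Theorem 3 gives that the resulting graph $H_q^*$ is $K_4$-free, has $\Theta(q^4)$ vertices and $\Theta(q^7)$ edges (each vertex has degree $\Theta(q^3)$). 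With $q=\Theta(\sqrt n)$ this is $\Theta(n^2)$ vertices and $\Theta(n^{3.5})$ edges, as required.

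The key steps, in order, are as follows.

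\emph{Choice of $q$.} Given $n\ge n_0$, pick a prime power $q$ with $q^2\in[Kn,2Kn]$ for a large absolute constant $K$ to be fixed later. Bertrand's postulate guarantees such a prime, and then $q=\Theta(\sqrt n)$ with constants depending only on $K$.

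\emph{Edge density in sets.} Invoke their Proposition 2 on edges in sets. Roughly, it says that with high probability over the random bipartitions, every vertex set $S$ of size $t$ spans at least $c\,t^2/q$ edges for all $t$ at least about $q^2$ times a polylog factor. Equivalently, $e(S)\ge c\,t^2 d/|V|$, which is the density lower bound matching the degree $d=\Theta(q^3)$ and $|V|=\Theta(q^4)$. The exact threshold on $t$ and the value of $c$ are what must be read off carefully from \cite{r4k}.

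\emph{Specialise to $|S|=n$.} Apply this bound to an arbitrary $n$-subset $S$, and use monotonicity under passing to subsets if the proposition is stated for one particular size. This gives $e(S)\ge c\,n^2/q$. Since $q\le\sqrt{2Kn}$, we get $e(S)\ge (c/\sqrt{2K})\,n^{3/2}$.

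\emph{Fix $n_0$.} Finally, take $n_0$ large enough that the high-probability statements hold, so that some outcome of the random construction satisfies every property simultaneously.

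The main obstacle is the direction of the constant in the set-density step. Making $q$ larger relative to $\sqrt n$ lowers the density $d/|V|\asymp 1/q$, so the constant $c/\sqrt{2K}$ gets worse as $K$ grows. On the other hand, $K$ cannot be taken too small, because $n$ must lie above the threshold size in Proposition 2. If the threshold is only of order $q^2$, up to constants, then one should instead take $q^2=\delta n$ with $\delta$ small. The bound becomes $e(S)\ge (c/\sqrt\delta)\,n^{3/2}$, which exceeds $10n^{3/2}$ once $\delta\le (c/10)^2$. I would first check that the subset-edge bound in \cite{r4k} applies at sizes $t\ge C q^2$ with no logarithmic loss. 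If it carries a $\log$ factor in the threshold, the fallback is to rescale the constant $10$, which only affects $C_0,C_1$ in \cref{thm:k=4tight}.
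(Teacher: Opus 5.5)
Your proposal follows the paper's route: the paper also just reads the statement off from the Mattheus--Verstra\"ete graph $H_q^*$ with $q=\Theta(\sqrt n)$ (Theorem 3 and Proposition 2 of \cite{r4k}), and taking $q^2=\delta n$ with $\delta$ small is the right way to turn an edge density of order $1/q$ on sets of size $\Theta(q^2)$ into $e(S)\ge 10n^{3/2}$, provided you pass to $n$-sets by averaging density over subsets rather than by monotonicity of edge counts, and choose $q^2\in[\delta n,4\delta n]$ so that Bertrand's postulate actually applies. Two details are off, though neither affects the main line: no deletion step is needed, because O'Nan showed the Hermitian unital contains no O'Nan configuration, so the random-bipartition graph is already $K_4$-free; and a logarithmic loss in the set-size threshold could not be fixed by rescaling the constant $10$, since it would force $q^2\lesssim n/\log n$ and break the $\Theta(n^2)$ vertex count.
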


\begin{proof}[Proof of \cref{thm:k=4tight}] Up to the choice of $C_1$, we may assume $n\geq n_0$. Take $H$ as in \cref{thm:k4freebase}, and we shall define $G$ as a randomly sparsified blow-up of $H$. Specifically, obtain $H_+$ by replacing each vertex $v\in V(H)$ with a clique $U_v$ of size $\lambda=\lfloor n/\log^2 n\rfloor$, and let $R=|V(H_+)|=\lambda |V(H)|$. Then, obtain a spanning subgraph $G$ from $H_+$ by keeping each cross edge $xy$ for $x\in U_u, y\in U_v,uv\in E(H)$ with probability $p=n^{-1/2}\log n$. Do not delete any edge within a part $U_v$. First, we check that $G$ has no independent set of size $n$. For every subset $S\subseteq V(G)$ of size $n$, $S$ can only be independent if each vertex lies in a different part $U_v$, in which case
\[
\PP[S\text{ is independent}]= (1-p)^{e_{H_+}(S)}\leq e^{-10pn^{3/2}}=n^{-10n}\ll \binom Rn^{-1},
\]
where we use $p=\frac{\log n}{n^{1/2}}$ and $R=\Theta(n^3/\log^2 n)$. Taking a union bound over all sets $S$, we have $\alpha(G)<n$ with high probability. Next, we estimate the clique counts of $G$. Using $m_{\leq k}(G)=\sum_{i\leq k}m_i(G)$, we will show:
\begin{equation}\label{eq:k4target}
\Ex[m_{\leq 4}(G)] \leq O\Big(R^4\frac{\log^5n}{n^6}\Big) = O\Big(\frac{n^6}{\log^3n}\Big).
\end{equation}
First, recall that $m_4(H)=0$, so the contrapositive of \cref{cor:nologs} gives $\frac{m_3(H)}{m_2(H)}\leq n$, and thus $m_3(H)\leq O(n^{4.5})$. To bound $m_i(G)$, we count for each $j\leq i$ the number of ways to find an $i$-clique of $G$ spanning a blown-up $j$-clique of $H$, where the probability depends on the number of edges crossing the $j$ corresponding parts of $H_+$.
Denote this count by $m_{i,j}(G)$, and we observe that the fewest crossing edges in a surjective map $K_i\to K_j$ is $\binom i2-\binom{i-j+1}2=i(j-1)-\binom j2$. Thus, using $\lambda=p^{-2}$ we have
\[
\Ex[m_{i,j}(G)]\leq O\Big(m_j(H)\lambda^i p^{i(j-1)-\binom j2 }\Big) = O\Big(m_j(H)p^{i(j-3)-\binom j2}\Big).
\]
For all $j\leq 3$, our bound on $\Ex m_{i,j}$ is increasing in $i$. It remains to consider $i=4$; we have
\[
\Ex[m_4(G)]\leq O\Big(n^2p^{-8}+n^{3.5}p^{-5}+n^{4.5}p^{-3}\Big)=O\Big(\frac{n^6}{\log^3n}\Big).
\]
This proves Equation \eqref{eq:k4target}. By Markov's inequality, we can select a particular graph $G$ on $R$ vertices with $\alpha(G)<n$ and $m_{\leq 4}(G) = O(g(R,4,n)/\log n)$. Then by \cref{lem:blowups}, we have $f(N,4,n)\leq O(g(N,4,n)/\log n)$ for all $N$.
\end{proof}

\section{Concluding Remarks}\label{sec:conclusion}

The methods in this paper can be used to obtain stronger bounds for graphs with additional properties. Our technique specifically tracks the \textit{ratios} $m_{k+1}/m_k$, so any further bounds on clique counts will yield a different result. For example, we have the following.
\begin{proposition}If $G$ satisfies the conditions of \cref{thm:main} and further $G$ has average degree $d=\frac{m_2}{m_1}$ then
\[
m_k(G)\geq \Omega\bigg(Nd^{k-1}\Big(\frac{\log n}{n}\Big)^{\binom{k-1}2}/\log n\bigg).
\]
\end{proposition}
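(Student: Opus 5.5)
The idea is to rerun the proof of \cref{thm:main}, but start the clique-building chain from the first ratio $m_2/m_1=d$ instead of $m_1/m_0=N$. Writing
\[
m_k=m_1\prod_{i=1}^{k-1}\frac{m_{i+1}}{m_i}=N\prod_{i=1}^{k-1}\frac{m_{i+1}}{m_i},
\]
it suffices to show the ratio bounds
\[
\frac{m_{i+1}}{m_i}\geq d\Big(c\frac{\log n}{n}\Big)^{i-1}\quad (1\le i\le k-2),
\qquad
\frac{m_k}{m_{k-1}}\geq c'd\frac{\log^{k-3}n}{n^{k-2}},
\]
or else a stronger conclusion coming from a triangle-type alternative. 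This is exactly \cref{thm:maininductive} with indices shifted by one. The level-$i$ neighborhoods have typical size $d(\log n/n)^{i-1}$ rather than $N(\log n/n)^i$, so $d$ plays the role of $N$ with one level already consumed.

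The plan has three steps.

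\begin{enumerate}
\item \textbf{Base case.} The $i=1$ bound is the identity $m_2/m_1=d$. Since $d\geq cN\log n/n$ by \cref{cor:AKS} (or, crudely, $d\ge N/n$ by Tur\'an), the hypothesis $N\geq c_1n^{k-1}/\log^{k-2}n$ gives
\[
d\ \gtrsim\ c_1 n^{k-2}/\log^{k-3}n .
\]
This is precisely the shifted threshold needed to keep every intermediate ratio above $n^{1+\eps}$, so that \cref{lem:logboost} applies.

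\item \textbf{Inductive steps.} Let $j$ be the first index where the log-boosted bound fails. \Cref{lem:logboost} then gives
\[
\frac{m_{j+2}}{m_j}\geq \frac{1}{n^{2+\eps}}\Big(\frac{m_j}{m_{j-1}}\Big)^2,
\]
and combining this with the failed upper bound on $m_{j+1}/m_j$ lower-bounds $m_{j+2}/m_{j+1}$. After that, \cref{cor:nologs} propagates the bound with a $1/n$ loss per level, as in $(C_i)$. In this branch the product exceeds the target by a factor of $n^{1-\eps}$ per skipped level, exactly as in the proof of \cref{thm:extralog}.

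\item \textbf{Last step.} If the log-boosted bound holds through level $k-2$, then
\[
\frac{m_{k-1}}{m_{k-2}}\geq d\Big(c\frac{\log n}{n}\Big)^{k-3}>2n .
\]
A single application of \cref{cor:nologs} gives the final ratio with loss $1/(2n)$, costing one factor of $\log n$. Multiplying the ratios yields
\[
m_k\ \ge\ N d^{k-1}\Big(c\frac{\log n}{n}\Big)^{\binom{k-1}{2}}\Big/\log n .
\]
\end{enumerate}

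The main obstacle is checking that every ratio in the chain stays above $n^{1+\eps}$ (for \cref{lem:logboost}) and above $2n$ (for \cref{cor:nologs}) at every level. Here \cref{thm:maininductive} is stated with $N\ge n^{k-2+\eps}$, whereas we now start from $d$. If $d$ is near its minimum the thresholds are met by the computation in step 1, and larger $d$ only helps, since all lower bounds are monotone in the starting ratio.

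A second point to verify is that the alternative branch still dominates the new target. That branch gives a bound of order
\[
N d^{k-1} n^{-\binom{k-1}{2}}\, n^{(k-2-j)(1-\eps)} .
\]
This is at least $Nd^{k-1}(\log n/n)^{\binom{k-1}{2}}/\log n$ as long as at least one level is skipped. In the degenerate case of no skipped level we are in the main branch of step 3 anyway.
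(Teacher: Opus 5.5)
Your route (rerun \cref{thm:maininductive} and the last step of \cref{thm:main}, with the chain started at $m_2/m_1=d$ instead of $m_1/m_0=N$) is the one the paper sketches. However, your base case is wrong, and as written the argument breaks at the final step.

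\cref{cor:AKS} does not give $d\ge cN\log n/n$. It is a dichotomy, and the other alternative really occurs: a disjoint union of $n-1$ cliques of size $N/(n-1)$ has $\alpha<n$ and $d\approx N/n$. The only unconditional bound is Tur\'an's, $d\gtrsim N/n\gtrsim c_1n^{k-2}/\log^{k-2}n$. Your displayed $c_1n^{k-2}/\log^{k-3}n$ silently uses the AKS version even though you call it the crude bound.

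With the true minimum, the intermediate uses of \cref{lem:logboost} and the $(C_i)$-type propagation are still fine, since they only need $d$ polynomially larger than $n^{k-3}$. The last step is not fine, because the chain only guarantees
\[
\frac{m_{k-1}}{m_{k-2}}\ \ge\ d\Big(c\frac{\log n}{n}\Big)^{k-3}\ \approx\ c_1c^{k-3}\frac{n}{\log n}\ <\ 2n .
\]
Below $n$, \cref{cor:nologs} (Tur\'an inside the neighborhoods) yields nothing. So your claim that ``if $d$ is near its minimum the thresholds are met'' is false. Already for $k=3$ you need $d\ge 2n$ but only know $d\gtrsim c_1n/\log n$.

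The repair is a case split on $d$. If $d\le N\log n/n$, then \cref{thm:main} alone suffices, because
\[
N^k\Big(\frac{\log n}{n}\Big)^{\binom k2}=N\Big(\frac{N\log n}{n}\Big)^{k-1}\Big(\frac{\log n}{n}\Big)^{\binom{k-1}2}\ \ge\ Nd^{k-1}\Big(\frac{\log n}{n}\Big)^{\binom{k-1}2}.
\]
If $d\ge N\log n/n$, then $d\ge c_1n^{k-2}/\log^{k-3}n$. Hence $d(c\log n/n)^{k-3}\ge c_1c^{k-3}n>2n$ once $c_1$ is large relative to the constant $c$ of \cref{lem:logboost}, and your chain goes through.

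A minor indexing point: if $j\in[2,k-2]$ is the first failing level as in your step 2, the alternative branch gains $n^{(k-1-j)(1-\eps)}$, not $n^{(k-2-j)(1-\eps)}$. So even $j=k-2$ gains $n^{1-\eps}$ and beats the target. There is no degenerate case to send to step 3.
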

This can be obtained by similarly to \cref{thm:main} by applying \cref{lem:logboost} inductively.

When $k\geq 5$, our bounds leave a gap on the order of $\log n$. The recent determination of $r(k,n)=\tilde\Theta(n^{k-1})$ brings renewed attention to the remaining logarithmic factors, which is further motivated by Theorems \ref{thm:tightness} and \ref{thm:k=4tight}. The main problem we leave open is the final logarithmic factor.

\begin{problem}For $k\geq 5$ and $N$ sufficiently large in terms of $n$, do we have $f(N,k,n)=\Theta(g(N,k,n)/\log n)$?
\end{problem}

Disproving this would improve the upper bound on $r(k,n)$. Any improvement on the upper bound would likely use a quasirandom graph which is `more effective' than $\bG(N,p)$ at avoiding $K_k$. Constructions of this general type already exist in the intermediate regime $n^{(k-1)/2}\ll N\ll n^{k-1}$, as witnessed by $r(k,n)$ (also see \cref{thm:k4freebase}).

For a fixed graph $H$, we define $f(N,H,n)$ as the minimum number of unlabeled copies of $H$ in an $N$-vertex graph with no independent set of size $n$, so $f(N,K_k,n)=f(N,k,n)$. We have the following theorem generalizing \cref{thm:nologs}. For complete multipartite graphs $H$, we do not know the polynomial order of $r(H,n)$, but we can solve the multiplicity problem up to log factors.
\begin{theorem}\label{thm:multipartite}
If $H$ is a complete multipartite graph and $\eps>0$, there is a constant $C$ such that if $N\geq Cn^{\delta(H)}$, where $\delta(H)$ is the minimum degree of $H$, then
\[
f(N,H,n)\geq \frac{(1-\eps)}{|\rm{Aut}(H)|}
N^{|V(H)|} n^{-|E(H)|}.
\]
\end{theorem}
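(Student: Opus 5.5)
Write $H=K_{a_1,\ldots,a_r}$ with $a_1\le\cdots\le a_r$ and $h=|V(H)|=\sum a_\ell$, so $\delta(H)=h-a_r$. I would imitate the proof of \cref{thm:nologs}: build copies of $H$ vertex by vertex and keep every embedded set large by averaging and convexity.

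The tool is a generalization of \cref{lem:nologs}. It counts labeled homomorphic copies $\hom(H,G)$ of a complete multipartite graph. Both $\hom$ and injective copies only increase if we add edges inside parts. So the key quantity is this: given tuples $\vec v$ already embedded, how many extensions add a new vertex adjacent to a prescribed subset of them. That number is the size of a common neighborhood.

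For common neighborhoods I will use the fact that in any set $X$ of $x$ vertices with $\alpha<n$, Tur\'an gives at least $F(x,n)\approx x^2/2n$ edges. By induction on the number of parts (the Erd\H{o}s/Bohman--Mubayi type averaging), one gets at least $(1-\eps)x^{s}n^{-\binom s2}$ labeled $s$-cliques once $x\ge Cn^{s-1}$. This is \eqref{eq:nologfinal} applied inside $X$. The cleaner route avoids cliques and uses a complete bipartite building block.

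\textbf{Key step: a K\H{o}v\'ari--S\'os--Tur\'an type count.} For a set $X$ of size $x$ with no independent $n$-set and any $s$, count $s$-tuples of vertices together with their common neighbors:
\[
\sum_{w}\deg(w)^s\ \ge\ x\,(\bar d)^s\ \ge\ x\,(x/n)^s(1-o(1)),
\]
by Jensen's inequality and Tur\'an. So an $s$-set has on average about $x^{s+1}n^{-s}/x^{s}=x n^{-s}$ common neighbors. This holds when $x\gg n$, since then $\bar d\approx x/n$.

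\textbf{Building $H$.} Embed the parts of $H$ in order, and order the vertices so that each new vertex $u$ in part $\ell$ is adjacent to all previously embedded vertices outside part $\ell$. Pick $u$ from the common neighborhood of those vertices. By repeated Jensen, as in \cref{lem:nologs} with $F$ convex, the product of average neighborhood sizes gives
\[
\hom(H,G)\ \ge\ (1-\eps)\,N^{h}\,n^{-|E(H)|}.
\]
A cleaner way to organize this is as follows. First fix the largest part $A_r$ last, so its vertices are chosen freely among the common neighbors of the other $h-a_r=\delta(H)$ vertices. Then
\[
\hom(H,G)=\sum_{\phi:H-A_r\to G}|N(\phi)|^{a_r}.
\]
Applying Jensen to this sum reduces the problem to $H-A_r$, a smaller complete multipartite graph. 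Inductively, $\hom(H-A_r,G)\ge (1-\eps)N^{h-a_r}n^{-e(H-A_r)}$, and
\[
\sum_{\phi}|N(\phi)|=\hom(H-A_r+\text{vertex},G).
\]
That sum is itself a homomorphism count of the multipartite graph with $A_r$ replaced by a single vertex, which I handle by the Tur\'an/convexity step applied to the neighborhoods $N(\phi)$. The average common neighborhood then has size about $N n^{-\delta(H)}$. This is where $N\ge Cn^{\delta(H)}$ is needed: it makes that average at least $Cn$, so the $(1-1/C)$ loss from \cref{cor:nologs} applies, and it makes degenerate homomorphisms negligible.

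\textbf{Finishing and the main obstacle.} I then pass from homomorphisms to injective copies. Non-injective maps number $O(N^{h-1}\cdot\text{stuff})$ and are a $1/C$ fraction under the threshold. Dividing by $|\mathrm{Aut}(H)|$ gives the stated bound. The main obstacle is the middle step. It is bounding $\sum_\phi |N(\phi)|$ from below for neighborhoods of \emph{non-clique} tuples, since vertices within a part need not be adjacent; this is where multipartiteness must be used. I would first try a multipartite analogue of \cref{lem:nologs}:
\[
\hom(K_{a_1,\ldots,a_{r-1},1},G)\ \ge\ \hom(K_{a_1,\ldots,a_{r-1}},G)\cdot\frac{N}{n^{\delta}}(1-\eps),
\]
proved by induction on parts via Tur\'an inside each common neighborhood. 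I would check carefully that the exponents of $n$ add up exactly to $|E(H)|$.
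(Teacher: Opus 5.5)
Your architecture (largest part last, one-vertex extension ratios, Jensen) matches the paper, but the proposal has two genuine gaps.

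\textbf{The middle step.} The step you flag as the main obstacle is the heart of the proof, and the proposal does not supply the idea that makes it work. Write $P=K_{a_1,\ldots,a_{r-2}}$, $a=a_{r-1}$, and $\rho_j=\hom(K_{a_1,\ldots,a_j,1},G)/\hom(K_{a_1,\ldots,a_j},G)$. Tur\'an inside each common neighborhood gives $\hom(K_{a_1,\ldots,a_{r-1},1},G)=\sum_{\psi}\sum_{w\in N(\psi)}\deg_{N(\psi)}(w)^{a}\ge\sum_{\psi}|N(\psi)|\big(|N(\psi)|/n-1\big)^{a}$, summed over homomorphisms $\psi$ of $P$. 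In \cref{lem:nologs} the denominator $m_k=\sum_{\vec v}|N(\vec v)|$ is linear in the neighborhood sizes, so Jensen in $|N(\vec v)|$ suffices. Here the denominator of the ratio you want is $\hom(K_{a_1,\ldots,a_{r-1}},G)=\sum_\psi|N(\psi)|^{a}$. This can be much larger than $\hom(P,G)\,\bar x^{a}$, where $\bar x$ is the average of $|N(\psi)|$. So Jensen in $|N(\psi)|$ only gives a ratio of order $\bar x^{a+1}n^{-a}\big/\overline{x^{a}}$, which may be far below $\bar x\, n^{-a}$.

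The missing idea is to treat the lower bound as a convex nondecreasing function of $y_\psi=|N(\psi)|^{a}$, roughly $y\mapsto y^{1+1/a}n^{-a}$, and apply Jensen in $y$. Then the average of $y_\psi$ is exactly the denominator ratio, and you get $\rho_{r-1}\ge(1-\eps)\,\bar y^{1/a}n^{-a}\ge(1-\eps)\,\rho_{r-2}\,n^{-a_{r-1}}$. This is the role of $F^*(x)=F_{s,t,n}(x^{1/t})$ in the paper. The paper adds two parts at once, counting $K_{s,t}$ inside the common neighborhood of a copy of $K_{t_1,\ldots,t_{k-1}}$ via the bipartite proposition; your one-vertex step is its case $s=1$.

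\textbf{Homomorphisms versus injective copies.} The passage from homomorphisms to injective copies is false at the threshold $N\ge Cn^{\delta(H)}$. Take $H=K_{2,3}$, so $\delta(H)=2$ and the target is $(1-\eps)N^5n^{-6}$ labeled copies. The homomorphisms identifying the two vertices of the part of size $2$ number $\hom(K_{1,3},G)=\sum_u\deg(u)^3\ge N(N/n-1)^3$ in \emph{every} admissible $G$. This exceeds $N^5n^{-6}$ by a factor of order $n^3/N$, which is unbounded on $Cn^2\le N\ll n^3$. So degenerate maps are not a $1/C$ fraction, and a lower bound on $\hom(H,G)$ says nothing about $m_H(G)$. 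You must count injective copies throughout, using falling factorials $|N(\phi)|^{\underline{a}}$ and a convex minorant such as $h(x,a)=\max\big(0,x^a(1-\binom a2/x)\big)$, as the paper does.

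Your threshold bookkeeping is also off. The condition $N\ge Cn^{\delta(H)}$ only guarantees that the average common neighborhood of a copy of $H-A_r$ has size about $Nn^{-\delta(H)}\ge C$, not $Cn$. That suffices for filling $A_r$, where you only need $x^{\underline{a_r}}\approx x^{a_r}$, but no Tur\'an step can be used there. The Tur\'an step belongs one level earlier, where neighborhoods have average size about $Nn^{-(a_1+\cdots+a_{r-2})}\ge Cn^{a_{r-1}}$.
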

This is tight up to polylogarithmic factors (and the bound on $N$), where the matching upper bound is once again from $\bG(N,p)$. The proof is deferred to Appendix \ref{sec:multipartite}; it is analogous to the proof of \cref{thm:nologs}, where instead of extending $K_k$ into $K_{k+1}$ by finding edges $K_{1,1}$ inside some neighborhood $N(K_{k-1})$, we perform the same extensions by finding $K_{s,t}$ inside some $N(H')$. We pose the following general problem, which may be more tractable for specific classes of graphs.

\begin{problem}For fixed $H$ and $N$ sufficiently large in terms of $n$, determine $f(N,H,n)$. For what graphs $H$ is there a phase transition at $N\approx r(H,n)$?
\end{problem}

\section*{Acknowledgments}

I would like to thank my advisor Xiaoyu He for his guidance, comments, and helpful discussions, particularly on a related problem in the hypergraph setting, which ultimately inspired this project. Following an early draft of this paper, the proof ideas for \cref{thm:k=4tight} were suggested by ChatGPT 5.6 Sol; the author independently verified and wrote the proof. All other proofs and ideas were derived without significant use of AI tools.

\newpage

\input{Compilation_Files/biblio}

\newpage
\appendix
\setlength{\abovedisplayskip}{6.5pt}
\setlength{\belowdisplayskip}{6.5pt}
\section{Complete multipartite graphs}\label{sec:multipartite}

For $t_1,\ldots, t_k\geq 1$ and a graph $G$, let $m_{t_1,\ldots, t_k}(G)$ be the number of labeled copies of the complete multipartite graph $K_{t_1,\ldots,t_k}$ in $G$. 

We begin by proving a quantitative version of our theorem when $H$ is bipartite. In the analogous argument for \cref{thm:nologs}, this corresponds to the choice of function $F(x,n)$, which works as intended by Tur\'an's theorem. We then use this density result to bootstrap from smaller multipartite graphs to larger ones. It would be interesting to obtain a polylogarithmic improvement on the following.

\begin{proposition}\label{lem:bipartite} If $s,t,n,N\geq 1$ and $G$ is an $N$-vertex graph with no independent set of size $n+1$, then 
\[m_{s,t}(G)\geq N^{s+t}n^{-st}\Big(1-s\binom {t+1}2\frac{n}{N}-\binom s2\frac{n^t}{N}\Big).
\]
In particular, if $s,t$ are fixed and $n^t=o(N)$ then the error is $o(1)$.
\end{proposition}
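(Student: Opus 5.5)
The plan is to count labeled copies of $K_{s,t}$ from the $t$-side. Fix an ordered $t$-tuple $T=(w_1,\ldots,w_t)$ of distinct vertices, and let $d(T)=|N(T)|$ be the size of its common neighborhood; this excludes the $w_i$ automatically. A labeled copy of $K_{s,t}$ with $t$-side $T$ is exactly an ordered $s$-tuple of distinct vertices of $N(T)$. Hence
\[
m_{s,t}=\sum_{T}(d(T))_s,\qquad m_{1,t}=\sum_T d(T)=\sum_{v}(\deg v)_t ,
\]
where $(x)_s=x(x-1)\cdots(x-s+1)$ and the sums over $T$ range over the $M=(N)_t\le N^t$ ordered $t$-tuples of distinct vertices. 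As in the proof of \cref{lem:nologs}, I will first lower-bound the first-moment quantity $m_{1,t}$ via Tur\'an's theorem, and then push the bound up to $m_{s,t}$ by convexity.

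\emph{Step 1 (bound on $m_{1,t}$).} Since $\alpha(G)\le n$, Tur\'an's theorem gives $e(G)\ge \frac{N^2}{2n}-\frac N2$, so the average degree satisfies $\bar d\ge N/n-1$. The map $x\mapsto(x)_t$ is convex on the relevant range, so Jensen gives
\[
m_{1,t}\ge N(\bar d)_t\ge N\prod_{i=1}^t\Big(\frac Nn-i\Big)\ge N\Big(\frac Nn\Big)^t\Big(1-\binom{t+1}2\frac nN\Big),
\]
using $\prod(1-a_i)\ge 1-\sum a_i$. If the bracket is negative the proposition is trivial, so I may assume it is positive. The convexity of $(x)_t$ near small $x$ needs care: I would either restrict to the piecewise-linear convex extension, which agrees with $(x)_t$ on integers, or apply Jensen directly to the convex function $\max(0,(x)_t)$ on $x\ge 0$.

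\emph{Step 2 (bound on $m_{s,t}$).} For integers $d\ge0$ I use the inequality $(d)_s\ge d^s-\binom s2 d^{s-1}$, which holds because $\prod(1-i/d)\ge 1-\sum i/d$ when $d\ge s$, and because for $d<s$ the right-hand side is nonpositive. Set $A=\sum_T d(T)^s$. Then H\"older gives $\sum_T d(T)^{s-1}\le A^{(s-1)/s}M^{1/s}$, and therefore
\[
m_{s,t}\ge A-\binom s2A^{(s-1)/s}M^{1/s}.
\]
The right-hand side is increasing in $A$ once $A^{1/s}\ge\binom s2 M^{1/s}$. By Jensen, $A\ge Mx^s$ with $x=m_{1,t}/M$, so I may substitute this lower bound to get
\[
m_{s,t}\ge Mx^s\Big(1-\binom s2\frac1x\Big).
\]
Next, $Mx^s=m_{1,t}^s/M^{s-1}\ge m_{1,t}^s/N^{t(s-1)}$, and by Step 1 this is at least $N^{s+t}n^{-st}\big(1-\binom{t+1}2\frac nN\big)^s\ge N^{s+t}n^{-st}\big(1-s\binom{t+1}2\frac nN\big)$. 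Also $x\ge m_{1,t}/N^t\gtrsim N n^{-t}$, so $\binom s2/x\lesssim \binom s2 n^t/N$. Multiplying the two factors and again using $(1-a)(1-b)\ge 1-a-b$ gives the stated bound.

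\emph{Main obstacle.} The delicate step is the bookkeeping in Step 2 when passing from $x$ to $n^t/N$. The bound $x\ge N n^{-t}\big(1-\binom{t+1}2 n/N\big)$ carries its own small error factor, and the error terms must be absorbed so that the constants come out exactly as $s\binom{t+1}2$ and $\binom s2$. I expect this to work by keeping the factor $x$ inside the product rather than bounding it separately, namely
\[
Mx^s-\binom s2 Mx^{s-1}=Mx^s\Big(1-\binom s2\frac{M}{m_{1,t}}\Big),
\]
and then bounding $M/m_{1,t}$ directly. The monotonicity condition needed in Step 2 also has to be checked, but it holds whenever the final bracket is positive; otherwise the claim is vacuous. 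The "in particular" statement is immediate, since $n\le n^t=o(N)$ for fixed $s,t$.
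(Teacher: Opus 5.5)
Your argument is essentially the paper's: Tur\'an plus Jensen over vertices to bound $m_{1,t}$, then Jensen over ordered $t$-tuples to bound $m_{s,t}$, and finally replacing $(N)_t$ by $N^t$. Your split $(d)_s\ge d^s-\binom s2 d^{s-1}$, combined with the power-mean inequality and monotonicity in $A$, is an unpacked version of what the paper does. The paper instead applies Jensen to the convex function $h(x,s)=\max(0,x^s-\binom s2x^{s-1})$ and uses that $h(x,s)/x$ is nondecreasing.

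Two small repairs are needed.

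\emph{Convexity in Step 1.} Drop the option of applying Jensen to $\max(0,(x)_t)$. That function is not convex on $x\ge 0$ once $t\ge 3$: for example, $x(x-1)(x-2)$ is positive on $(0,1)$ and vanishes at $0$ and $1$. Your piecewise-linear interpolant does work. Its second differences are $t(t-1)(d)_{t-2}\ge 0$, and it dominates $(x)_t$ for $x\ge t-1$. That range covers $\bar d$ whenever the bracket is positive.

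\emph{Bookkeeping in Step 2.} The issue you flag resolves by cancelling one factor of $1-\delta$, where $\delta=\binom{t+1}2\frac nN$, before using Bernoulli. From $Mx^s\ge N^{s+t}n^{-st}(1-\delta)^s$ and $\binom s2/x\le \binom s2 n^t/(N(1-\delta))$, multiply these bounds; both factors are nonnegative when the final bracket is positive. This gives
\[
m_{s,t}\ge N^{s+t}n^{-st}\Big((1-\delta)^s-\binom s2\frac{n^t}{N}(1-\delta)^{s-1}\Big)\ge N^{s+t}n^{-st}\Big(1-s\delta-\binom s2\frac{n^t}{N}\Big).
\]
Bounding $M/m_{1,t}$ directly, as you suggest, still leaves the factor $1/(1-\delta)$ unless you cancel it this way. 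The paper does the equivalent step by plugging $y=Nn^{-t}(1-\delta)$ into the nondecreasing function $h(\cdot,s)$ and using $y^{s-1}\le (Nn^{-t})^{s-1}$.
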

The asymptotic version of the proposition above for $N$ sufficiently large is almost immediate: use Tur\'an's theorem to bound $m_{1,1}$, and then the Sidorenko property to bound $m_{s,t}$. The exact error term is important in our regime where $n$ grows with $N$. Denote the falling factorial by $x^{\underline t}=\prod_{i=0}^{t-1}(x-i)$, and define the function
\[
h(x,t)\coloneqq \max\bigg(0,x^t\Big(1-\binom t2\frac 1x\Big)\bigg),
\]
We use that $h(x,t)$ is convex in $x$ and for all non-negative integers $N$, $h(N,t)\leq N^{\underline t}\leq N^t$.

\begin{proof}We may assume $N\geq n$ or else the bound is non-positive. First, we check when $s=1$. The average degree of $G$ is at least $\frac{N}{n}-1$ by Tur\'an's theorem. Then by convexity,
\begin{align*}
m_{1,t}&=\sum_{v\in V(G)}|N(v)|^{\underline t}\geq \sum_{v\in V(G)}h(|N(v)|,t)\geq N\,h\Big(\frac Nn-1,t\Big) \geq N^{t+1}n^{-t}\Big(1-\binom{t+1}2\frac{n}{N}\Big).
\end{align*}
Let $S_{t,0}$ be the set of labeled $t$-sets of $V(G)$, so $|S_{t,0}|=N^{\underline t}$. Then by convexity again,
\begin{align*}
m_{s,t}=\sum_{\vec v\in S_{t,0}}|N(\vec v)|^{\underline s} 
\geq \sum_{\vec v\in S_{t,0}}h(N(\vec v),s)
\geq N^{\underline t}\,h\Big(\frac{m_{1,t}}{N^{\underline t}},s\Big)\geq N^t\,h\Big(\frac{m_{1,t}}{N^t},s\Big),
\end{align*}
using that $h(x,s)/x$ is nondecreasing for non-negative $x$. Then
\begin{align*}
m_{s,t}
&\geq N^t\,h\Big(Nn^{-t}(1-\tbinom{t+1}2n/N),t\Big)\\
&\geq N^{s+t}n^{-st}\Big(1-s\binom{t+1}2\frac nN-\binom s2 \frac{1}{Nn^{-t}}\Big).\qedhere
\end{align*}
\end{proof}
We define 
\[
F_{s,t,n}(N) = \max(\RHS,0),
\]
where the $\RHS$ is from \cref{lem:bipartite}. 
When $s=t=1$, this specializes to the Tur\'an bound $F(x,n+1)=\frac{x^2}{2n}(1-\frac nx)$ (the factor of $2$ arises from labeling). Let $F^*(x)=F_{s,t,n}(x^{1/t})$, so 
\[
F^*(x)=\max(0,x^{1+s/t}-C_{s,t,n}x^{1+s/t-1/t})n^{-st},
\]
for some $C_{s,t,n}>0$. We now check that $F^*$ is convex and nondecreasing on $\R_{\geq 0}$; indeed we can replace $F^*$ with any function $g(x)=\max(0,x^a-Cx^b)$ with $a\geq 1$ and $a\geq b\geq 0$ and $C>0$. It suffices to show these properties when $x\geq C^{\frac{1}{a-b}}$. Then we have
\begin{align*}
g'(x)&=ax^{a-1}-bCx^{b-1}\geq 0\\
g''(x)&=a(a-1)x^{a-2}-b(b-1)Cx^{b-2}\geq 0
\end{align*}
since $a\geq b$, $a(a-1)\geq b(b-1)$ and $Cx^{b-a}\leq 1$. In the following lemma, we generalize \cref{lem:nologs} by considering the average number of extensions from smaller subgraphs to larger ones.

\begin{lemma}Let $G$ be an $N$-vertex graph with no independent set of size $n$. For $t_1,\ldots, t_{k-1},s,t\geq 1$, if we have $m_{t_1,\ldots,t_{k-1}}>0$, then
\[
m_{t_1,\ldots, t_{k-1},t,s} \geq m_{t_1,\ldots,t_{k-1}}F^*\Big(\frac{m_{t_1,\ldots,t_{k-1},t}}{m_{t_1,\ldots,t_{k-1}}}\Big)
\]
\end{lemma}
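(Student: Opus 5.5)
We follow the proof of \cref{lem:nologs}, using $K_{t_1,\ldots,t_{k-1}}$ as the base and extending it by a complete bipartite graph $K_{t,s}$ inside its common neighborhood. Let $S$ be the set of labeled copies $\vec v$ of $K_{t_1,\ldots,t_{k-1}}$, so $|S|=m_{t_1,\ldots,t_{k-1}}$. Let $N(\vec v)$ be the common neighborhood of all vertices of $\vec v$.

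\textbf{Step 1: exact counts.} A labeled copy of $K_{t_1,\ldots,t_{k-1},t}$ is a pair $(\vec v,\vec w)$ with $\vec v\in S$ and $\vec w$ an ordered $t$-tuple of distinct vertices in $N(\vec v)$. Indeed, the new part must be an independent-free choice of vertices adjacent to everything in $\vec v$; in the complete multipartite graph there are no edges inside a part, so any $t$ distinct common neighbors work. Hence
\[
m_{t_1,\ldots,t_{k-1},t}=\sum_{\vec v\in S}|N(\vec v)|^{\underline t}.
\]
Similarly, a labeled copy of $K_{t_1,\ldots,t_{k-1},t,s}$ is a pair $(\vec v,\text{copy of }K_{t,s}\text{ in }G[N(\vec v)])$. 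Thus
\[
m_{t_1,\ldots,t_{k-1},t,s}=\sum_{\vec v\in S}m_{s,t}\big(G[N(\vec v)]\big).
\]
Every induced subgraph $G[N(\vec v)]$ still has no independent set of size $n$. By \cref{lem:bipartite}, with the $n$ there taken as $n-1$ or shifted as in the definition of $F_{s,t,n}$, each term is at least $F_{s,t,n}(|N(\vec v)|)$. Matching the index convention of $F_{s,t,n}$ to ``no independent set of size $n$'' is a bookkeeping point to check.

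\textbf{Step 2: change of variables.} We have $F_{s,t,n}(|N(\vec v)|)=F^*(|N(\vec v)|^t)$. Since $F^*$ is nondecreasing and $|N(\vec v)|^t\ge |N(\vec v)|^{\underline t}$, this gives
\[
m_{t_1,\ldots,t_{k-1},t,s}\ge \sum_{\vec v\in S}F^*\big(|N(\vec v)|^{\underline t}\big).
\]
Comparing through $x^{\underline t}$ rather than $x^t$ is essential, because the extension count is expressed in falling factorials.

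\textbf{Step 3: Jensen.} $F^*$ was shown convex on $\mathbb R_{\ge 0}$. Averaging over $S$ gives
\[
\sum_{\vec v}F^*\big(|N(\vec v)|^{\underline t}\big)\ge |S|\,F^*\Big(\frac{m_{t_1,\ldots,t_{k-1},t}}{m_{t_1,\ldots,t_{k-1}}}\Big),
\]
which is the claim.

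The main obstacle is Step 2, specifically the interface between \cref{lem:bipartite} and $F^*$. We need the pointwise bound $m_{s,t}(G')\ge F^*(|V(G')|^t)$, together with monotonicity of $F^*$, to legitimately replace $|N(\vec v)|^t$ by the smaller quantity $|N(\vec v)|^{\underline t}$. Both are supplied by the preceding paragraph, since $F^*$ is the clamp at $0$ of $x^a-Cx^b$. The rest is a verbatim adaptation of the proof of \cref{lem:nologs}.
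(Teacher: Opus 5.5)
Your proposal is correct and matches the paper's proof step for step: write $m_{t_1,\ldots,t_{k-1},t,s}=\sum_{\vec v\in S}m_{s,t}(N(\vec v))$, bound each term by $F_{s,t,n}(|N(\vec v)|)=F^*(|N(\vec v)|^t)\ge F^*(|N(\vec v)|^{\underline t})$ using monotonicity, and finish with Jensen via convexity of $F^*$. The indexing point you flag is immediate: a graph with no independent set of size $n$ has none of size $n+1$, so \cref{lem:bipartite} applies with the same $n$ and gives exactly $F_{s,t,n}$.
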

\begin{proof} Let $S=S_{t_1,\ldots, t_{k-1}}$ be the set of labeled copies of $K_{t_1,\ldots, t_{k-1}}$, and note that $m_{t_1,\ldots, t_{k-1},t}=\sum_{\vec v\in S}|N(\vec v)|^{\underline t}$. Then using that $F^*$ is convex and nondecreasing,
\begin{align*}
m_{t_1,\ldots, t_{k-1},s,t} = \sum_{\vec v\in S}m_{s,t}(N(\vec v)) 
\geq  \sum_{\vec v\in S}F_{s,t,n}(|N(\vec v)|)
&\geq \sum_{\vec v\in S}F^*(|N(\vec v)|^{\underline t})\\
&\geq m_{t_1,\ldots,t_{k-1}}F^*\Big(\frac{m_{t_1,\ldots,t_{k-1},t}}{m_{t_1,\ldots,t_{k-1}}}\Big).\qedhere
\end{align*}
\end{proof}
Using the estimate $F^*(x)\approx x^{1+s/t}n^{-st}$ for $x\gg n^{t^2}$, we obtain the following corollary.
\begin{corollary}\label{cor:bipextensions} For fixed $s,t\geq1$, $\eps>0$, there exists $C$ such that if $n\geq 1$ and $\frac{m_{t_1,\ldots,t_{k-1},t}}{m_{t_1,\ldots,t_{k-1}}}\geq Cn^{t^2}$, then
\[
\frac{m_{t_1,\ldots, t_{k-1},t,s}}{m_{t_1,\ldots, t_{k-1},t}}\geq (1-\eps)\Big(\frac{m_{t_1,\ldots,t_{k-1},t}}{m_{t_1,\ldots,t_{k-1}}}\Big)^{\frac{s}{t}}n^{-st}
\]
\end{corollary}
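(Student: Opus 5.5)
We need to prove the corollary: given the ratio $x=m_{t_1,\ldots,t_{k-1},t}/m_{t_1,\ldots,t_{k-1}}\geq Cn^{t^2}$, the preceding lemma gives
\[
m_{t_1,\ldots,t_{k-1},t,s}\geq m_{t_1,\ldots,t_{k-1}}F^*(x),
\qquad\text{so}\qquad
\frac{m_{t_1,\ldots,t_{k-1},t,s}}{m_{t_1,\ldots,t_{k-1},t}}\geq \frac{F^*(x)}{x}.
\]
The plan is to show $F^*(x)\geq (1-\eps)x^{1+s/t}n^{-st}$ once $x\geq Cn^{t^2}$. The desired bound then follows by dividing by $x$.

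First I will compute $F^*$ explicitly from \cref{lem:bipartite}. We have $F^*(x)=F_{s,t,n}(x^{1/t})$, with $N=x^{1/t}$, which gives
\[
F^*(x)\geq x^{1+s/t}n^{-st}\Big(1-s\tbinom{t+1}2\frac{n}{x^{1/t}}-\tbinom s2\frac{n^t}{x^{1/t}}\Big).
\]
Here I note a small subtlety about the indexing. In the lemma, the neighborhoods have no independent set of size $n$, while \cref{lem:bipartite} is stated for no independent set of size $n+1$. Applying it with $n$ in place of $n+1$ only improves the bound, so I will use it as written with parameter $n$; the constants absorb the difference.

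Next I use $x\geq Cn^{t^2}$, which gives $x^{1/t}\geq C^{1/t}n^{t}$. This bounds both error terms. The first is at most $s\binom{t+1}2 n^{1-t}C^{-1/t}\leq s\binom{t+1}2C^{-1/t}$ since $t\geq 1$. The second is at most $\binom s2 C^{-1/t}$. Choosing $C$ large in terms of $s,t,\eps$ makes the bracket at least $1-\eps$, so in particular the $\max(\cdot,0)$ in $F^*$ is inactive. This finishes the proof.

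The only real obstacle is bookkeeping. One must confirm that the direction of convexity and monotonicity used in the lemma matches the substitution $N=x^{1/t}$, which the paper has already checked. One must also confirm that the threshold $n^{t^2}$ is exactly what makes the $n^t/N$ error term small, which the computation above verifies.
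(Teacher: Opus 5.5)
Your proof is correct and is the same argument as the paper's, which simply invokes the estimate $F^*(x)\approx x^{1+s/t}n^{-st}$ for $x\gg n^{t^2}$. You apply the preceding lemma, divide by $m_{t_1,\ldots,t_{k-1},t}=x\,m_{t_1,\ldots,t_{k-1}}$, and use $x^{1/t}\geq C^{1/t}n^{t}$ to make both error terms in $F^*(x)=F_{s,t,n}(x^{1/t})$ at most $O_{s,t}(C^{-1/t})$; your remark on $n$ versus $n+1$ is not needed here, since that bookkeeping lives inside the lemma and holds simply because a graph with no independent set of size $n$ has none of size $n+1$.
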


Using this, we prove \cref{thm:multipartite} by an inductive calculation. For a graph $H$, let $m_H(G)$ be the number of labeled copies of $H$ in $G$. We prove the following version of the theorem.

\begin{theorem}Let $G$ be an $N$-vertex graph with no independent set of size $n$. For fixed $t_1,\ldots, t_{k-1},t\geq 1$ and $\eps>0$ there exists $C>0$ such that if $n\geq 1$ and $N\geq Cn^{t_1+\cdots+t_{k-1}}$ then
\[
\frac{m_{t_1,\ldots,t_{k-1},t}}{m_{t_1,\ldots,t_{k-1}}}\geq (1-\eps)N^tn^{-t(t_1+\cdots+t_{k-1})}
\]
Consequently, letting $H=K_{t_1,\ldots,t_{k-1},t}$, if $N\geq Cn^{\delta(H)}$, we have
\[
m_H(G)\geq (1-\eps)^k N^{|V(H)|} n^{-|E(H)|}.
\]
\end{theorem}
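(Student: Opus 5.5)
We prove the ratio bound by induction on $k$, the number of parts before the last one, using \cref{cor:bipextensions} as the inductive step, in the same way \cref{thm:nologs} used \cref{cor:nologs}. Write $T_{k-1}=t_1+\cdots+t_{k-1}$ and use the convention $m_{\emptyset}=1$ for the empty multipartite graph.

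\emph{Base case $k=1$.} Here the ratio is $m_t/m_\emptyset=N^{\underline t}$, the number of labeled $t$-sets. This is at least $(1-\eps)N^t$ once $N\geq C$ with $C$ large in terms of $t$ and $\eps$. (We may assume $n\geq 1$ and $N$ large.)

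\emph{Inductive step.} Assume the statement for $k-1$ parts. Apply it to the graph $K_{t_1,\ldots,t_{k-2}}$ with new last part $t_{k-1}$. This gives
\[
\frac{m_{t_1,\ldots,t_{k-1}}}{m_{t_1,\ldots,t_{k-2}}}\geq (1-\eps')N^{t_{k-1}}n^{-t_{k-1}T_{k-2}}.
\]
The hypothesis $N\geq Cn^{T_{k-1}}$ implies $N\geq Cn^{T_{k-2}}$, so the inductive hypothesis is available. Now use \cref{cor:bipextensions} with its parameters set as follows: its ``$t$'' is $t_{k-1}$, its ``$s$'' is our new part $t$, and its prefix is $t_1,\ldots,t_{k-2}$. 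The corollary needs the preceding ratio to be at least $C'n^{t_{k-1}^2}$. We check this:
\[
N^{t_{k-1}}n^{-t_{k-1}T_{k-2}}=\big(Nn^{-T_{k-2}}\big)^{t_{k-1}}\geq \big(Cn^{t_{k-1}}\big)^{t_{k-1}},
\]
which is at least $C'n^{t_{k-1}^2}$ for $C$ large. The corollary then yields
\[
\frac{m_{t_1,\ldots,t_{k-1},t}}{m_{t_1,\ldots,t_{k-1}}}\geq (1-\eps')\Big((1-\eps')N^{t_{k-1}}n^{-t_{k-1}T_{k-2}}\Big)^{t/t_{k-1}}n^{-t\,t_{k-1}}=(1-\eps')^{1+t/t_{k-1}}N^tn^{-tT_{k-1}}.
\]
Choosing $\eps'$ small in terms of $\eps$ and the $t_i$ gives the factor $(1-\eps)$. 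Since the $t_i$ are fixed, the finitely many choices of $\eps'$ and $C$ propagate through the induction. The corollary is stated for a generic prefix, so it applies verbatim. One point needs checking: the corollary's labeling of $K_{\ldots,t,s}$ versus $K_{\ldots,s,t}$ is inconsistent in the text, but it concerns the same graph.

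\emph{Consequence for $m_H$.} Reorder the parts of $H$ so that the last part $t$ is the largest; copies of $H$ are unaffected by ordering the parts. Then $\delta(H)=|V(H)|-t=t_1+\cdots+t_{k-1}=T_{k-1}$, and every prefix sum $T_j$ is at most this. Telescope over prefixes:
\[
m_H=\prod_{j=1}^{k}\frac{m_{t_1,\ldots,t_j}}{m_{t_1,\ldots,t_{j-1}}}.
\]
Each factor is handled by the ratio bound, whose hypothesis $N\geq Cn^{T_{j-1}}$ follows from $N\geq Cn^{\delta(H)}$. The product of the main terms is
\[
\prod_j N^{t_j}n^{-t_jT_{j-1}}=N^{|V(H)|}n^{-\sum_{i<j}t_it_j}=N^{|V(H)|}n^{-|E(H)|},
\]
with the loss factor $(1-\eps)^k$.

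\emph{Main obstacle.} Little is hard here beyond bookkeeping. The delicate point is the threshold check in the inductive step: one must confirm that the required lower bound $n^{t_{k-1}^2}$ on the preceding ratio is implied by $N\geq Cn^{T_{k-1}}$. Ordering the parts so the largest comes last is exactly what makes $N\geq Cn^{\delta(H)}$ suffice. For the same reason, the ratio statement should be used only with prefixes of this ordering.
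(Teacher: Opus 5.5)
Your proposal is correct and follows the paper's proof. You induct on the number of parts, check that $N\geq Cn^{T_{k-1}}$ pushes the previous ratio above the $Cn^{t_{k-1}^2}$ threshold, apply \cref{cor:bipextensions}, and telescope with the largest part placed last so that $\delta(H)=T_{k-1}$. The only difference is cosmetic: you start at $k=1$ (where $m_t=N^{\underline t}$) and use the corollary with an empty prefix under the convention $m_\emptyset=1$, $N(\emptyset)=V(G)$, which reduces to the paper's base case via \cref{lem:bipartite}.
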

The `consequently' part is immediate by writing $m_H$ as a product of consecutive ratios. Indeed, the ratio in this lemma exactly corresponds to the incremental difference in the vertex and edge counts by adding a new part of size $t$. Note that we can set $t_1+\cdots+t_{k-1}=\delta(H)$ by intentionally choosing the last part to be the largest.
\begin{proof}
The base case follows directly from \cref{lem:bipartite}. For the inductive case, let $s\geq 1$ and assume $N\geq Cn^{t_1+\ldots+ t_{k-1}+t}$, so we have 
\[\frac{m_{t_1,\ldots,t_{k-1},t}}{m_{t_1,\ldots,t_{k-1}}}\geq (1-\eps)\big(Nn^{-(t_1+\cdots+ t_{k-1})}\big)^t\geq Cn^{t^2}.
\]
Now by \cref{cor:bipextensions}, we have
\begin{align*}
\frac{m_{t_1,\ldots, t_{k-1},t,s}}{m_{t_1,\ldots, t_{k-1},t}}&\geq (1-\eps)\Big((1-\eps)N^t n^{-t(t_1+\cdots+t_{k-1})}\Big)^{s/t}n^{-st}\\
&\geq (1-\eps')N^sn^{-s(t_1+\cdots+t_{k-1}+t)},
\end{align*}
as desired, up to the choice of $\eps$.
\end{proof}

\end{document}

%% file: Compilation_Files/preamble.tex
\usepackage{amsmath,amsthm,amsfonts,graphicx,float}
\usepackage{amssymb,upgreek,stmaryrd,framed,bbm}
\usepackage{wrapfig,mathrsfs,xfrac,mathtools,epsdice}
\usepackage{qtree,setspace,accents,verbatim}
\usepackage[x11names]{xcolor}
\usepackage[ruled,vlined]{algorithm2e}
\usepackage[normalem]{ulem}
\usepackage[english]{babel}
\usepackage{framed}
\usepackage[shortlabels]{enumitem}

\usepackage{thmtools,thm-restate}

\usepackage{tikz}
\usetikzlibrary{shapes}

\makeatletter\let\@@citation@@=\citation\renewcommand{\citation}[1]{\@@citation@@{#1}\@for\@tempa:=#1\do{\@ifundefined{cit@\@tempa}{\global\@namedef{cit@\@tempa}{}}{}}}\makeatother
\usepackage[linktocpage=true]{hyperref}
\usepackage[capitalize]{cleveref}
\makeatletter\def\@lbibitem[#1]#2#3\par{\@ifundefined{cit@#2}{}{\@skiphyperreftrue\H@item[\ifx\Hy@raisedlink\@empty\hyper@anchorstart{cite.#2\@extra@b@citeb}\@BIBLABEL{#1}\hyper@anchorend\else\Hy@raisedlink{\hyper@anchorstart{cite.#2\@extra@b@citeb}\hyper@anchorend}\@BIBLABEL{#1}\fi\hfill]\@skiphyperreffalse}\if@filesw\begingroup\let\protect\noexpand\immediate\write\@auxout{\string\bibcite{#2}{#1}}\endgroup\fi\ignorespaces\@ifundefined{cit@#2}{}{#3}} \def\@bibitem#1#2\par{\@ifundefined{cit@#1}{}{\@skiphyperreftrue\H@item\@skiphyperreffalse\Hy@raisedlink{\hyper@anchorstart{cite.#1\@extra@b@citeb}\relax\hyper@anchorend}}\if@filesw\begingroup\let\protect\noexpand\immediate\write\@auxout{\string\bibcite{#1}{\the\value{\@listctr}}}\endgroup\fi\ignorespaces\@ifundefined{cit@#1}{}{#2}}\makeatother

\newcommand{\Ex}{\mathbb{E}}

\newcommand{\PP}{\mathbb{P}}

\newcommand{\R}{\mathbb{R}}

\newcommand{\cP}{\mathcal{P}}

\newcommand{\bG}{\mathbb{G}}

\newcommand{\bR}{\mathbb{R}}

\newcommand{\eps}{\upvarepsilon}

\renewcommand{\rm}[1]{\mathrm{#1}}

\def\multichoose#1#2{\ensuremath{\left(\kern-.3em\left(\genfrac{}{}{0pt}{}{#1}{#2}\right)\kern-.3em\right)}}

\DeclareMathOperator{\poly}{poly}

\DeclareMathOperator{\RHS}{RHS}

\newtheorem{theorem}{Theorem}[section]
\newtheorem{lemma}[theorem]{Lemma}
\newtheorem{problem}[theorem]{Problem}
\newtheorem{proposition}[theorem]{Proposition}
\newtheorem{corollary}{Corollary}[theorem]

\theoremstyle{definition}
\newtheorem*{definition}{Definition}

\theoremstyle{remark}




%% file: Compilation_Files/biblio.tex

